\def\eqnarray{\stepcounter{equation}\let\@currentlabel=\theequation
	\global\@eqnswtrue
	\tabskip\@centering\let\\=\@eqncr
	$$\halign to \displaywidth\bgroup\hfil\global\@eqcnt\z@
	$\displaystyle\tabskip\z@{##}$&\global\@eqcnt\@ne
	\hfil$\displaystyle{{}##{}}$\hfil
	&\global\@eqcnt\tw@ $\displaystyle{##}$\hfil
	\tabskip\@centering&\llap{##}\tabskip\z@\cr}
\def\endeqnarray{\@@eqncr\egroup
	\global\advance\c@equation\m@ne$$\global\@ignoretrue}
\def\@yeqncr{\@ifnextchar [{\@xeqncr}{\@xeqncr[5pt]}}
\newcommand\boro[1]{{\textcolor{black}{#1}}}
\newcommand{\yoro}[1]{{\textcolor{black}{#1}}}
\newcommand{\zoro}[1]{{\textcolor{black}{#1}}}
\newcommand{\qhyp}[5]{\,\mbox{}_{#1}\phi_{#2}\!\left(
\genfrac{}{}{0pt}{}{#3}{#4};#5\right)}
\NewDocumentCommand{\qfrac}{smm}{%
 \dfrac{\IfBooleanT{#1}{\vphantom{\big|}}#2}{\mathstrut #3}%
}
\newtheorem{thm}[theorem]{Theorem}
\newtheorem{cor}[corollary]{Corollary}
\newtheorem{rem}[remark]{Remark}
\newtheorem{defn}[definition]{Definition}
\def\eqnarray{\stepcounter{equation}\let\@currentlabel=\theequation
\global\@eqnswtrue
\tabskip\@centering\let\\=\@eqncr
$$\halign to \displaywidth\bgroup\hfil\global\@eqcnt\z@
$\displaystyle\tabskip\z@{##}$&\global\@eqcnt\@ne
\hfil$\displaystyle{{}##{}}$\hfil
&\global\@eqcnt\tw@ $\displaystyle{##}$\hfil
\tabskip\@centering&\llap{##}\tabskip\z@\cr}
\def\endeqnarray{\@@eqncr\egroup
\global\advance\c@equation\m@ne$$\global\@ignoretrue}
\def\@yeqncr{\@ifnextchar [{\@xeqncr}{\@xeqncr[5pt]}}
\newcommand{\CC}{{\mathbb C}}
\newcommand{\expe}{{\mathrm e}}
\newcommand{\CCast}{{\mathbb C}^\ast}
\newcommand{\SSS}{{\mathcal S}}
\begin{document}

\renewcommand{\PaperNumber}{***}

\FirstPageHeading

\ShortArticleName{Terminating Basic Hyper. Representations \& Transformations 
for the Askey--Wilson Polyn.}

\ArticleName{Terminating Basic Hypergeometric Representations and Transformations 
for the Askey--Wilson Polynomials}

\Author{Howard S. Cohl $^{1,}$*$^{,\dagger}$, Roberto S. Costas-Santos $^{2,\dagger}$ and 
Linus Ge $^{3,\dagger}$}

\AuthorNameForHeading{H. S. Cohl, R. S. Costas-Santos, and L. Ge}
\Address{$^\dag$ Applied and Computational Mathematics Division,
National Institute of Standards and Technology,
Gaithersburg, MD 20899-8910, USA
\URLaddressD{http://www.nist.gov/itl/math/msg/howard-s-cohl.cfm}
} 
\EmailD{howard.cohl@nist.gov} 

\Address{$^\S$ Dpto. de F\'isica y Matem\'{a}ticas,
Universidad de Alcal\'{a},
c.p. 28871, Alcal\'{a} de Henares, Spain
} 
\URLaddressD{http://www.rscosan.com}
\EmailD{rscosa@gmail.com} 

\Address{$^\ddag$ Department of Mathematics, University of Rochester, 
Rochester, NY 14627, USA}
\EmailD{lge6@u.rochester.edu} 


\ArticleDates{Received ?? \today in final form ????; Published online ????}

\abstract{In this survey paper, we exhaustively explore the terminating 
basic hypergeometric representations of the Askey--Wilson polynomials
and the corresponding terminating basic hypergeometric transformations 
that these polynomials satisfy.}

\Keywords{basic hypergeometric series; basic hypergeometric orthogonal polynomials; 
basic hypergeometric transformations}

\section{Introduction}
\label{Introduction}
This paper is a study in $q$-calculus (typically taken with $|q|<1$).
The $q$-calculus (introduced by such luminaries as Leonhard Euler,
Eduard Heine and Garl Gustav Jacobi)
is a calculus of finite differences which becomes the standard infinitesimal calculus (introduced by
Isaac Newton and Gottfried Wilhelm Leibniz) in the limit
as $q\to 1$.
The work contained in this paper is
directly connected to 
properties of the Askey--Wilson polynomials $p_n(x;{\bf a}|q)$ (\cite[\S 14.1]{Koekoeketal}) 
which are at the very top of the $q$-Askey 
scheme (see e.g., \cite[Chapter 14]{Koekoeketal}). 
The Askey--Wilson polynomials
are basic hypergeometric orthogonal
polynomials with interpretations
in quantum group theory, 
combinatorics, and probability.
The applications of Askey--Wilson
polynomials include invariants of
links, ${\textit{3}}$-manifolds 
and ${\textit{6}}j$-symbols
(see e.g., \cite{Rosengren2007}).
The definition of the Askey--Wilson
polynomials in terms of terminating
basic hypergeometric series
are given in
Theorem \ref{AWthm} below. The Askey--Wilson polynomials are symmetric with respect to their 
four free parameters, that is, they remain unchanged upon 
interchange of any two of the
four free parameters. 
\boro{It should be emphasized that since
1970, the subjects of special functions
and special families of
orthogonal polynomials have gone
through major developments, 
of which the study of the Askey--Wilson
polynomials has been central.
Many of the properties of these
polynomials can be derived from
their terminating basic hypergeometric
representations, so an exhaustive
catalog of these representations, as contained here,
will be quite convenient for lookup.}

The Askey--Wilson polynomials can be defined in terms of 
terminating basic hypergeometric series 
(see, e.g., \eqref{2.12}), which in turn are defined in 
terms of a sum of products of $q$-Pochhammer symbols.
Using the properties of $q$-Pochhammer symbols, it is straightforward to 
replace $q\mapsto 1/q$ in the complex plane in order to obtain an extension 
of these polynomials with $|q|>1$. One often refers to these polynomials obtained
as $q^{-1}$ or $1/q$ polynomials. Since these algebraic factors 
are difficult to search on in the literature, we refer to this extension 
specifically as the $q$-inverse polynomials. 


It should however be noted that while the Askey--Wilson polynomials 
represent an infinite-family of orthogonal polynomials ($n\in\mathbb N_0$), 
orthogonal with respect to a weight function on $[-1,1]$ 
\cite[(14.1.2)]{Koekoeketal} (which gives restrictions on the values of 
the free parameters), the $q$-inverse Askey--Wilson polynomials represent 
a finite-family of basic hypergeometric orthogonal polynomials 
($n\in\{0,\ldots,N\}$, $N\in\mathbb N_0$)
(see e.g. \cite{IsmailZhang2019}). 
Not only that, but it is also known that the $q$-inverse Askey--Wilson 
polynomials are simply a scaled version of the Askey--Wilson polynomials 
with their parameters replaced by their reciprocals (see Remark 
\ref{invAWrem}, below).

In the sequel, from the basic terminating hypergeometric representations 
of the Askey--Wilson polynomials, we derive terminating basic hypergeometric 
representations
for the $q$-inverse Askey--Wilson polynomials 
\boro{(the Askey--Wilson polynomials with $q$ replaced by $1/q$).
From the terminating basic hypergeometric representations of the 
Askey--Wilson polynomials, one can easily 
derive transformation formulae for terminating basic hypergeometric functions.
}

The main focus of this survey paper will be to exhaustively describe the 
transformation identities for the terminating basic hypergeometric functions which
appear as representations for these polynomials. Some of these transformation 
identities are well-known in the literature, but we also give the transformation 
identities for these basic hypergeometric functions which are 
obtained by the symmetry of the polynomials under parameter interchange, 
and under the map $\theta\mapsto-\theta$, for $x=\cos\theta$.
\boro{It should be noted that the symmetry group $G$ of these functions
coincide with the symmetry group of the very-well poised ${}_8W_7$
which is a subgroup $WD_5$ of $WB_5$, the Weyl group of a root system of
type $B_n$. The order of $G$ is $5!2^4=1920$. For more details, see
\cite{VanderJeugtRao1999,KrattenthalerRao2004,Kajihara2014}.
}

\section{Preliminaries}
\label{Preliminaries}
We adopt the following set 
notations: $\mathbb N_0:=\{0\}\cup\mathbb N=\{0, 1, 2, \ldots\}$, and we use 
the sets $\mathbb Z$, $\mathbb R$,  $\mathbb C$ which represent 
the integers, real numbers  and complex numbers respectively, 
$\CCast:=\CC\setminus\{0\}$.
We also adopt the following notation and conventions.
Let ${\bf a}:=\{a_1, a_2, a_3, a_4\}$, $b, a_k\in\CC$, $k=1, 2, 3, 4$. 
Define
${\bf a}+b:=\{a_1+b,a_2+b,a_3+b,a_3+b\}$,
$a_{12}:=a_1a_2$,
$a_{13}:=a_1a_3$,
$a_{23}:=a_2a_3$,
$a_{123}:=a_1a_2a_3$,
$a_{1234}:=a_1a_2a_3a_4$, etc. 
Throughout the paper, we assume that the empty sum 
vanishes and the 
empty product 
is unity.

\begin{defn} \label{def:2.1}
Throughout this paper we adopt the following conventions for succinctly 
writing elements of lists. To indicate sequential positive and negative 
elements, we write
\[
\pm a:=\{a,-a\}.
\]
\noindent We also adopt an analogous notation
\[
\expe^{\pm i\theta}:=\{\expe^{i\theta},\expe^{-i\theta}\}.
\]
\noindent In the same vein, consider a finite 
sequence $f_s\in\CC$ with $s\in{\mathcal S}\subset \mathbb N$.
Then, the notation
$\{f_s\}$
represents the sequence of all complex numbers $f_s$ such that 
$s\in\SSS$.
Furthermore, consider some $p\in\SSS$, then the notation
$\{f_s\}_{s\ne p}$ represents the sequence of all complex numbers
$f_s$ such that $s\in\SSS\!\setminus\!\{p\}$.
In addition, for the empty list, $n=0$, we take
\[
\{a_1,\ldots,a_n\}:=\emptyset.
\]
\end{defn}

Consider $q\in\CC^\ast$ such that $|q|\ne 1$.
Define the sets 
$\Omega_q^n:=\{q^{-k}:n,k\in\mathbb N_0, 0\le k\le n-1\}$,
$\Omega_q:=\Omega_q^\infty=\{q^{-k}:k\in\mathbb N_0\}$.
In order to obtain our derived identities, we rely on properties 
of the $q$-Pochhammer symbol ($q$-shifted factorial). 
For any $n\in \mathbb N_0$, $a,q \in \CC$, 
the 
$q$-Pochhammer symbol is defined as
\begin{eqnarray}
&&\hspace{-3.3cm}\label{poch.id:1} 
(a;q)_n:=(1-a)(1-aq)\cdots(1-aq^{n-1}),\quad n\in\mathbb N_0.
\end{eqnarray}
One may also define
\begin{equation}
(a;q)_\infty:=\prod_{n=0}^\infty (1-aq^{n}), \label{poch.id:2}
\end{equation}
where $|q|<1$.
Furthermore, define
\[
(a;q)_b:=\frac{(a;q)_\infty}{(a q^b;q)_\infty}.
\]
where $a q^b\not \in \Omega_q$.
We will also use the common notational product conventions
\begin{eqnarray}
&&\hspace{-8cm}(a_1,...,a_k)_b:=(a_1)_b\cdots(a_k)_b,\nonumber\\
&&\hspace{-8cm}(a_1,...,a_k;q)_b:=(a_1;q)_b\cdots(a_k;q)_b.\nonumber
\end{eqnarray}

The following properties for the $q$-Pochhammer 
symbol can be found in Koekoek et al. 
\cite[(1.8.7), (1.8.10-11), (1.8.14), (1.8.19), 
(1.8.21-22)]{Koekoeketal}, namely for appropriate 
values of $q,a\in\CCast$ and $n,k\in\mathbb N_0$:
\begin{eqnarray}
\label{poch.id:3} &&\hspace{-6.5cm}
(a;q^{-1})_n=(a^{-1};q)_n(-a)^nq^{-\binom{n}{2}},
\\[2mm] 
\label{poch.id:4}
&&\hspace{-6.5cm}(a;q)_{n+k}=(a;q)_k(aq^k;q)_n 
= (a;q)_n(aq^n;q)_k,\\[2mm] 
\label{poch.id:5}&&\hspace{-6.5cm} (a;q)_n=(q^{1-n}/a;q)_n(-a)^nq^{\binom{n}{2}},\\[2mm]
\label{poch.id:6}&&\hspace{-6.5cm}(aq^{-n};q)_{k}=q^{-nk}
\frac{(q/a;q)_n}{(q^{1-k}/a;q)_n}(a;q)_k,\\[2mm]
\label{poch.id:8}&&\hspace{-6.5cm}(a^2;q^2)_n=(\pm a;q)_n,\\[2mm]
\label{poch.id:7}&&\hspace{-6.5cm}(a;q)_{2n}=(a,aq;q^2)_n=(\pm\sqrt{a},\pm\sqrt{qa};q)_n.
\end{eqnarray}
\noindent Observe that by using (\ref{poch.id:1}) and 
(\ref{poch.id:8}), one obtains
\begin{eqnarray}
&&\hspace{-6.5cm}(aq^n;q)_n=\frac{(\pm 
\sqrt{a},\pm \sqrt{aq};q)_n}{(a;q)_n},\quad 
a\not\in\Omega_q^n. 
\label{poch.id:9}
\end{eqnarray}

The basic hypergeometric series, which we 
will often use, is defined for
$q,z\in\CCast$ such that $|q|,|z|<1$, $s,r\in\mathbb N_0$, 
$b_j\not\in\Omega_q$, $j=1,...,s$, as
\cite[(1.10.1)]{Koekoeketal}
\begin{equation}
\qhyp{r}{s}{a_1,...,a_r}
{b_1,...,b_s}
{q,z}
:=\sum_{k=0}^\infty
\frac{(a_1,...,a_r;q)_k}
{(q,b_1,...,b_s;q)_k}
\left((-1)^kq^{\binom k2}\right)^{1+s-r}
z^k.
\label{2.11}
\end{equation}
\noindent Note that we refer to a basic hypergeometric
series as { $\ell$-balanced} if
$q^\ell a_1\cdots a_r=b_1\cdots b_s$, 
and { balanced} if $\ell=1$.
A basic hypergeometric series ${}_{r+1}\phi_r$ is { well-poised} if 
the parameters satisfy the relations
\[
qa_1=b_1a_2=b_2a_3=\cdots=b_ra_{r+1}.
\]
\noindent It is {very-well poised} if in addition, 
$\{a_2,a_3\}=\pm q\sqrt{a_1}$.

Similarly for terminating basic hypergeometric series 
which appear in basic hypergeometric orthogonal
polynomials, one has
\begin{equation}
\qhyp{r}{s}{q^{-n},a_1,...,a_{r-1}}
{b_1,...,b_s}{q,z}:=\sum_{k=0}^n
\frac{(q^{-n},a_1,...,a_{r-1};q)_k}{(q,b_1,...,b_s;q)_k}
\left((-1)^kq^{\binom k2}\right)^{1+s-r}z^k,
\label{2.12}
\end{equation}
where $b_j\not\in\Omega_q^n$, $j=1,...,s$.
Define the very-well poised 
basic hypergeometric series
${}_{r+1}W_r$ \cite[(2.1.11)]{GaspRah}
\begin{equation}
\label{rpWr}
{}_{r+1}W_r(b;a_4,\ldots,a_{r+1};q,z)
:=\qhyp{r+1}{r}{b,\pm q\sqrt{b},a_4,\ldots,a_{r+1}}
{\pm \sqrt{b},\frac{qb}{a_4},\ldots,\frac{qb}{a_{r+1}}}{q,z},
\end{equation} 
where $\sqrt{b},\frac{qb}{a_4},\ldots,\frac{qb}{a_{r+1}}\not\in\Omega_q$. 
When the very-well poised basic hypergeometric
series is terminating, then one has
\begin{equation}
{}_{r+1}W_r\left(b;q^{-n},a_5,\ldots,a_{r+1};q,z\right)
=\qhyp{r+1}{r}{b,\pm q\sqrt{b},q^{-n},a_5,\ldots,a_{r+1}}
{\pm \sqrt{b},q^{n+1}b,\frac{qb}{a_5},\ldots,\frac{qb}{a_{r+1}}}
{q,z},
\label{eq:2.13}
\end{equation}
where $\sqrt{b},\frac{qb}{a_5},\ldots,\frac{qb}{a_{r+1}}\not\in\Omega_q^n\cup\{0\}$.
The Askey--Wilson polynomials are intimately connected with 
the terminating very-well poised ${}_8W_7$, which is
given by
\begin{equation}
\label{VWP87}
{}_{8}W_7(b;q^{-n}\!,c,d,e,f;q,z)
=\qhyp{8}{7}{b,\pm q\sqrt{b},q^{-n},c,d,e,f}
{\pm \sqrt{b},q^{n+1}b,\frac{qb}{c},\frac{qb}{d},\frac{qb}{e},\frac{qb}{f}}{q,z},
\end{equation}
where $\sqrt{b},\frac{qb}{c},\frac{qb}{d},\frac{qb}{e},\frac{qb}{f}\not\in\Omega_q^n\cup\{0\}$.

\medskip


Some classical transformations for basic hypergeometric series which 
we will use include Watson's $q$-analog of Whipple's theorem which 
relates a terminating balanced ${}_4\phi_3$ to a terminating very-well 
poised ${}_8W_7$ (cf. \cite[(17.9.15)]{NIST:DLMF})
\begin{equation}
\label{WatqWhipp}
\qhyp43{q^{-n},a,b,c}{d,e,f}{q,q}=
\frac{\left(\frac{de}{ab},\frac{de}{ac};q\right)}
{\left(\frac{de}{a},\frac{de}{abc};q\right)}
{}_8W_7\left(\frac{de}{qa};q^{-n},\frac{d}{a},\frac{e}{a},b,c;q,\frac{qa}{f}\right),
\end{equation}
where \yoro{$q^{1-n}abc=def$}.

In \cite[Exercise 1.4ii]{GaspRah}, one finds the inversion formula for
terminating basic hypergeometric series.

\begin{thm}[Gasper and Rahman (1990)] \label{thm:2.2}
Let $m, n, k, r, s\in\mathbb N_0$, $0\le k\le r$, $0\le m\le s$, $a_k\in\CC$, $b_m\not \in\Omega^n_q$,
$q\in\CC^\ast$ such that $|q|\ne 1$.
Then,
\begin{eqnarray}
\qhyp{r+1}{s}{q^{-n},a_1,...,a_r}{b_1,...,b_s}{q,z}=&& \frac{(a_1,...,a_r;q)_n}
{(b_1,...,b_s;q)_n}\left(\frac{z}{q}\right)^n\left((-1)^nq^{\binom{n}{2}}\right)^{s-r-1} \nonumber \\
&& \label{inversion} \times \sum_{k=0}^n \frac{\left(q^{-n},\frac{q^{1-n}}{b_1},...,\frac{q^{1-n}}{b_s};q\right)_k}
{\left(q,\frac{q^{1-n}}{a_1},...,\frac{q^{1-n}}{a_r};q\right)_k} \left(\frac{b_1 \cdots b_s}{a_1 \cdots a_r}
\frac{q^{n+1}}{z}\right)^k.
\end{eqnarray}
\end{thm}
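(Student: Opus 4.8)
The plan is to prove \eqref{inversion} by the classical device of \emph{reversing the order of summation} in the terminating series on the left. Since the defining sum \eqref{2.12} of the ${}_{r+1}\phi_s$ on the left-hand side terminates at $k=n$, the substitution $k\mapsto n-k$ merely permutes the summands, so I may rewrite the left-hand side as $\sum_{k=0}^n T_{n-k}$, where $T_j$ denotes the $j$-th summand. Every $q$-Pochhammer symbol appearing in $T_{n-k}$ then has the shape $(c;q)_{n-k}$, with $c\in\{q^{-n},a_1,\dots,a_r\}$ in the numerator and $c\in\{q,b_1,\dots,b_s\}$ in the denominator, and the whole argument reduces to re-expressing each such symbol so that its index runs over $0,\dots,k$ rather than $0,\dots,n-k$.

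The key technical ingredient is the reduction identity
\[
(c;q)_{n-k}=(c;q)_n\,\frac{(-q/c)^k\,q^{\binom k2-nk}}{(q^{1-n}/c;q)_k},
\]
which I would establish directly from the identities already recorded in the preliminaries: splitting $(c;q)_n=(c;q)_{n-k}\,(cq^{n-k};q)_k$ by \eqref{poch.id:4}, and then applying the reflection formula \eqref{poch.id:5} to $(cq^{n-k};q)_k$ to convert its argument into $q^{1-n}/c$. Applying this identity to each of the $r+1$ numerator symbols and each of the $s+1$ denominator symbols of $T_{n-k}$ accomplishes three things at once. First, it peels off the $k$-independent factors $(c;q)_n$, which assemble into the prefactor $\frac{(q^{-n};q)_n\,(a_1,\dots,a_r;q)_n}{(q;q)_n\,(b_1,\dots,b_s;q)_n}$. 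Second, the surviving reciprocal symbols are exactly $(q^{1-n}/c;q)_k$, and after sorting numerator against denominator they reproduce precisely the ratio $\frac{(q^{-n},q^{1-n}/b_1,\dots,q^{1-n}/b_s;q)_k}{(q,q^{1-n}/a_1,\dots,q^{1-n}/a_r;q)_k}$ claimed on the right (the special cases $c=q^{-n}$ and $c=q$ producing the $(q;q)_k$ and $(q^{-n};q)_k$ factors, respectively). Third, it generates a product of powers of $(-1)$, of $q$, and of the parameters.

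The only genuine work is then to verify that this residual collection of powers, together with the factor $\left((-1)^{n-k}q^{\binom{n-k}2}\right)^{s-r}z^{n-k}$ inherited from \eqref{2.12}, collapses correctly. I would separate it into its $k$-independent and $k$-dependent parts. Using the expansion $\binom{n-k}2=\binom n2+\binom k2+k-nk$, one checks that all contributions weighted by $(s-r)$ — in particular the $\binom k2$ terms from the $r+1$ numerator and $s+1$ denominator reductions and the $\binom{n-k}2$ from the original summand — cancel, so that the entire $k$-dependent remainder reduces to $\left(\frac{b_1\cdots b_s}{a_1\cdots a_r}\,\frac{q^{n+1}}{z}\right)^k$, matching the geometric factor on the right. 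The $k$-independent leftovers, combined with the evaluation $(q^{-n};q)_n=(-1)^n q^{-\binom{n+1}2}(q;q)_n$, must then reproduce the stated prefactor $\left(\frac z q\right)^n\left((-1)^nq^{\binom n2}\right)^{s-r-1}$; this final reconciliation uses only $\binom{n+1}2-\binom n2=n$. This bookkeeping of the $(s-r)$-weighted $q$-powers and signs is the sole obstacle: there is no analytic subtlety, but the computation is error-prone, and keeping the exponents of $q$ and of $(-1)$ aligned across the four streams of contributions is precisely where care is required.
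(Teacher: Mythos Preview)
The paper does not supply its own proof of this theorem; it simply attributes the inversion formula to \cite[Exercise~1.4(ii)]{GaspRah} and states the result. Your approach --- reversing the order of summation via $k\mapsto n-k$ and then applying the reduction identity $(c;q)_{n-k}=(c;q)_n(-q/c)^k q^{\binom{k}{2}-nk}/(q^{1-n}/c;q)_k$ (which, as you note, follows from \eqref{poch.id:4} and \eqref{poch.id:5}) to every $q$-Pochhammer symbol --- is exactly the standard argument that exercise invites, and your bookkeeping sketch is sound.
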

\begin{cor}\label{cor:2.4}
Let $n,r\in\mathbb N_0$, $q\in\CC^\ast$ such that $|q|\ne 1$, and 
for $0\le k\le r$, let $a_k, b_k\not\in\Omega^n_q\cup\{0\}$.
Then,
\begin{equation}\label{cor:2.4:r1}
\qhyp{r+1}{r}{q^{-n},a_1,\ldots,a_r}{b_1,\ldots,b_r}{q,z}\!=\!
(-1)^nq^{-\binom{n}{2}}
\frac{(a_1,\ldots,a_r;q)_n}
{(b_1,\ldots,b_r;q)_n}
\qhyp{r+1}{r}{q^{-n},
\frac{q^{1-n}}{b_1},\ldots,
\frac{q^{1-n}}{b_r}}
{\frac{q^{1-n}}{a_1},\ldots,
\frac{q^{1-n}}{a_r}}{q,
\frac{q^{n+1}}{z}\frac{b_1\cdots b_r}
{a_1\cdots a_r}}.
\end{equation}
\end{cor}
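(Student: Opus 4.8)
The plan is to derive Corollary \ref{cor:2.4} as a direct specialization of Theorem \ref{thm:2.2}. The corollary is precisely the inversion formula \eqref{inversion} in the balanced case $s=r$, where the number of lower parameters equals the number of (non-$q^{-n}$) upper parameters. So the entire proof is a matter of substituting $s=r$ into \eqref{inversion} and simplifying the prefactor, which collapses considerably.

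First I would set $s = r$ in \eqref{inversion}. The exponent $s-r-1$ on the factor $\bigl((-1)^n q^{\binom{n}{2}}\bigr)^{s-r-1}$ becomes $-1$, so that factor reduces to $\bigl((-1)^n q^{\binom{n}{2}}\bigr)^{-1} = (-1)^{-n} q^{-\binom{n}{2}} = (-1)^n q^{-\binom{n}{2}}$, using $(-1)^{-n}=(-1)^n$. Next I would examine the remaining $z$-dependence of the prefactor: in \eqref{inversion} the prefactor carries a factor $(z/q)^n$, while the summand carries $(q^{n+1}/z)^k$ inside the $k$-sum. In the corollary's right-hand side the summand's argument is written as $\tfrac{q^{n+1}}{z}\tfrac{b_1\cdots b_r}{a_1\cdots a_r}$, i.e.\ the new ${}_{r+1}\phi_r$ has this as its variable; one must check that the $(z/q)^n$ from the prefactor is exactly absorbed so that no stray power of $z$ survives outside. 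The cleanest way is to observe that the inner sum over $k$ in \eqref{inversion}, with $s=r$, is itself a terminating ${}_{r+1}\phi_r$ with upper parameters $q^{-n}, q^{1-n}/b_1,\ldots,q^{1-n}/b_r$, lower parameters $q^{1-n}/a_1,\ldots,q^{1-n}/a_r$, and argument $\tfrac{q^{n+1}}{z}\tfrac{b_1\cdots b_r}{a_1\cdots a_r}$; matching this against the definition \eqref{2.12} requires verifying that the balancing-type factor $\bigl((-1)^k q^{\binom{k}{2}}\bigr)^{1+s-r}$ in \eqref{2.12} is trivial when $s=r$, since its exponent $1+r-r=1$ gives $(-1)^k q^{\binom{k}{2}}$, whereas the sum in \eqref{inversion} has no such factor.

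Here lies the one genuine point to watch, and I expect it to be \textbf{the main obstacle}: reconciling the Gaussian factor. The summand in \eqref{inversion} is a bare ratio of $q$-Pochhammer symbols times $(\cdots)^k$, with no $(-1)^k q^{\binom{k}{2}}$ weight, whereas the target ${}_{r+1}\phi_r$ in \eqref{cor:2.4:r1}, read through \eqref{2.12} with $s=r$, does carry one factor of $(-1)^k q^{\binom{k}{2}}$. These must be reconciled by absorbing that Gaussian weight into the argument via the standard identity: a power $q^{\binom{k}{2}}$ times $w^k$ can be rewritten using $q^{\binom{k}{2}} = (-1)^k (q^{1-k};q)$-type manipulations, or more simply one checks the $z$-argument so that the extra $(-1)^k q^{\binom{k}{2}}$ demanded by \eqref{2.12} combines with the $(q^{n+1}/z)^k$ and the ratio of Pochhammer symbols in \eqref{inversion} to reproduce exactly the stated form. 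Concretely, the argument $\tfrac{q^{n+1}}{z}$ in \eqref{inversion} must already account for this, so the verification is to expand both sides term-by-term in $k$ and confirm the coefficients agree.

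Having matched the $k$-sum to a ${}_{r+1}\phi_r$ and simplified the scalar prefactor to $(-1)^n q^{-\binom{n}{2}} \tfrac{(a_1,\ldots,a_r;q)_n}{(b_1,\ldots,b_r;q)_n}$, the identity \eqref{cor:2.4:r1} follows immediately. I would close by recording the parameter restrictions: the hypotheses $a_k,b_k\notin\Omega_q^n\cup\{0\}$ guarantee that all the reciprocals $q^{1-n}/a_k$ and $q^{1-n}/b_k$ appearing as parameters of the transformed series are well-defined and that no lower parameter of either series lies in $\Omega_q^n$, so that both terminating series in \eqref{cor:2.4:r1} make sense. Since every step is a substitution into an already-proved identity together with the elementary simplification of one sign/power factor, no deeper machinery is needed.
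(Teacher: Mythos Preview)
Your approach is exactly the paper's: set $s=r$ in Theorem~\ref{thm:2.2} and recognise the resulting $k$-sum as a ${}_{r+1}\phi_r$ via the definition. However, your ``main obstacle'' is a phantom born of an index slip. In definition~\eqref{2.12} the series ${}_{r}\phi_{s}$ carries the weight $\bigl((-1)^k q^{\binom{k}{2}}\bigr)^{1+s-r}$; for a ${}_{r+1}\phi_{r}$ one must substitute the definition's ``$r$'' by $r+1$ and ``$s$'' by $r$, so the exponent is $1+r-(r+1)=0$, not $1+r-r=1$ as you wrote. There is therefore \emph{no} Gaussian factor to reconcile: the bare $k$-sum on the right of \eqref{inversion} with $s=r$ is already, verbatim, the required ${}_{r+1}\phi_r$ with the stated parameters and argument.

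A separate point: the prefactor $(z/q)^n$ from \eqref{inversion} does \emph{not} get absorbed anywhere. Setting $s=r$ in \eqref{inversion} yields
\[
(-1)^n q^{-\binom{n}{2}}\left(\frac{z}{q}\right)^{\!n}\frac{(a_1,\ldots,a_r;q)_n}{(b_1,\ldots,b_r;q)_n}\,
\qhyp{r+1}{r}{q^{-n},\frac{q^{1-n}}{b_1},\ldots,\frac{q^{1-n}}{b_r}}{\frac{q^{1-n}}{a_1},\ldots,\frac{q^{1-n}}{a_r}}{q,\frac{q^{n+1}}{z}\frac{b_1\cdots b_r}{a_1\cdots a_r}},
\]
which is precisely the identity recorded later as the second equality in \eqref{qtopiden}. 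The statement of Corollary~\ref{cor:2.4} as printed is missing the factor $(z/q)^n$; your instinct that the right-hand side did not quite match the specialisation was sound, but the resolution is a typographical omission in the stated corollary, not a hidden cancellation.
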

\begin{proof}
Take $r=s$, in \eqref{inversion} and using the definition \eqref{2.11}
completes the proof.
\end{proof}
\zoro{
\noindent Note that in Corollary \ref{cor:2.4}
if the terminating basic hypergeometric
series on the left-hand side is balanced
then the argument of the terminating
basic hypergeometric series on the right-hand side is $q^2/z$.
Applying Corollary \ref{cor:2.4} to the definition
of ${}_{r+1}W_r$, we obtain the following result for a terminating very-well 
poised basic hypergeometric series ${}_{r+1}W_r$.
}
\begin{cor} \label{cor:2.5}
Let 
$n\in\mathbb N_0$, $b,a_k,q,z\in\CCast$, $\sqrt{b},q^{n+1}b,\frac{q b}{a_k},
\frac{q^{1-n}}{b},\frac{q^{1-n}}{a_k}\not\in\Omega_q^n$, $k=5,\ldots,r+1$. 
Then one has the following transformation formula for a very-well poised terminating 
basic hypergeometric series:
\begin{eqnarray}
&&\hspace{-0.4cm}{}_{r+1}W_r\left(b;q^{-n},a_5,\ldots,a_{r+1};q,z\right)=
q^{-\binom{n}{2}}\left(\frac{-z}{q}\right)^n
\frac{(\pm q\sqrt{b},b,a_5,\ldots,a_{r+1};q)_n}
{\left(\pm \sqrt{b},q^{n+1}b,\frac{qb}{a_5},\ldots,\frac{qb}{a_{r+1}};q\right)_n}\nonumber\\
&&\hspace{5cm}\times{}_{r+1}W_r\left(\frac{q^{-2n}}{b};q^{-n},
\frac{q^{-n}a_5}{b},\ldots,\frac{q^{-n}a_{r+1}}{b};q,\frac{q^{2n+r-3}b^{r-3}}{(a_5\cdots a_{r+1})^2 z}\right).\nonumber
\end{eqnarray}
\end{cor}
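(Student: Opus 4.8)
The plan is to write the very-well poised series on the left-hand side as an ordinary terminating ${}_{r+1}\phi_r$ via \eqref{eq:2.13}, apply the inversion of Corollary \ref{cor:2.4} (equivalently Theorem \ref{thm:2.2} with $s=r$), and then recognize the inverted series as a second very-well poised ${}_{r+1}W_r$, again through \eqref{eq:2.13}. Concretely, using permutation invariance of the upper and lower parameter lists I would record
\[
{}_{r+1}W_r\left(b;q^{-n},a_5,\ldots,a_{r+1};q,z\right)
=\qhyp{r+1}{r}{q^{-n},b,\pm q\sqrt{b},a_5,\ldots,a_{r+1}}
{\pm \sqrt{b},q^{n+1}b,\frac{qb}{a_5},\ldots,\frac{qb}{a_{r+1}}}{q,z},
\]
and match the left-hand side of \eqref{cor:2.4:r1} by the assignment $a_1,\ldots,a_r=b,\pm q\sqrt{b},a_5,\ldots,a_{r+1}$ and $b_1,\ldots,b_r=\pm\sqrt{b},q^{n+1}b,\frac{qb}{a_5},\ldots,\frac{qb}{a_{r+1}}$.

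For the scalar prefactor, the numerator $(a_1,\ldots,a_r;q)_n$ regroups as $(b,\pm q\sqrt{b},a_5,\ldots,a_{r+1};q)_n$ and the denominator $(b_1,\ldots,b_r;q)_n$ as $(\pm\sqrt{b},q^{n+1}b,\frac{qb}{a_5},\ldots,\frac{qb}{a_{r+1}};q)_n$, which is exactly the Pochhammer ratio in the statement. The accompanying numerical factor is $(-1)^nq^{-\binom{n}{2}}(z/q)^n$ (the $(z/q)^n$ being visible directly in Theorem \ref{thm:2.2}), and this collapses to $q^{-\binom{n}{2}}(-z/q)^n$, matching the corollary.

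It then remains to identify the transformed series. The inverted upper entries $q^{-n},\,q^{1-n}/b_j$ become $q^{-n},\,\pm q^{1-n}/\sqrt{b},\,q^{-2n}/b,\,q^{-n}a_k/b$, and the inverted lower entries $q^{1-n}/a_j$ become $q^{1-n}/b,\,\pm q^{-n}/\sqrt{b},\,q^{1-n}/a_k$. Setting $B:=q^{-2n}/b$ I would verify the very-well poised pattern $\sqrt{B}=q^{-n}/\sqrt{b}$, $q\sqrt{B}=q^{1-n}/\sqrt{b}$, $q^{n+1}B=q^{1-n}/b$ and $qB/(q^{-n}a_k/b)=q^{1-n}/a_k$, so that \eqref{eq:2.13} reassembles the inverted series as ${}_{r+1}W_r$ with base $B$, terminating entry $q^{-n}$, and remaining numerator entries $q^{-n}a_k/b$. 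The argument $\frac{q^{n+1}}{z}\frac{b_1\cdots b_r}{a_1\cdots a_r}$ is then evaluated: the pairs $\pm\sqrt{b}$ and $\pm q\sqrt{b}$ contribute $-b$ and $-q^2b$, the $r-3$ free parameters $a_5,\ldots,a_{r+1}$ enter squared, and the product reduces to $\frac{q^{2n+r-3}b^{r-3}}{(a_5\cdots a_{r+1})^2 z}$.

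The only real difficulty is organizational rather than conceptual: one must confirm that inversion carries a very-well poised configuration to another very-well poised configuration — i.e., that the $\pm\sqrt{\cdot}$ and $q^{n+1}(\cdot)$ structure is preserved under $b_j\mapsto q^{1-n}/b_j$ and $a_j\mapsto q^{1-n}/a_j$ — and then execute the product $b_1\cdots b_r/(a_1\cdots a_r)$ without mishandling the square-root pairs or miscounting the $r-3$ parameters $a_5,\ldots,a_{r+1}$. No identity beyond Corollary \ref{cor:2.4} and the definition \eqref{eq:2.13} is required.
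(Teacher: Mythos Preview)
Your proposal is correct and follows exactly the paper's own proof, which simply says ``Use Corollary \ref{cor:2.4} and \eqref{eq:2.13}.'' You have merely spelled out the bookkeeping (the preservation of the very-well poised structure under inversion and the evaluation of the argument) that the paper leaves implicit.
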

\begin{proof}
Use Corollary \ref{cor:2.4} and \eqref{eq:2.13}.
\end{proof}

An interesting and useful consequence of this formula is the $r=7$ special case,
\begin{eqnarray}
&&{}_8W_7\left(b;q^{-n},c,d,e,f;q,z\right)
=q^{-\binom{n}{2}} \left(\frac{-z}{q}\right)^n
\frac{\left(\pm q\sqrt{b},b,c,d,e,f;q\right)_n}
{\left(\pm\sqrt{b},q^{n+1}b,\frac{qb}{c},\frac{qb}{d},\frac{qb}{e},\frac{qb}{f};q\right)_n}\nonumber\\
&&\hspace{4.5cm}\times
{}_8W_7\left(\frac{q^{-2n}}{b};q^{-n},
\frac{q^{-n}c}{b},
\frac{q^{-n}d}{b},
\frac{q^{-n}e}{b},
\frac{q^{-n}f}{b};
q,\frac{q^{2n+4}b^4}{z(cdef)^2}
\right).
\label{inv87}
\end{eqnarray}
\noindent Note that in the case when the one
obtains an ${}_8W_7$ from a balanced 
${}_4\phi_3$ using \eqref{WatqWhipp},
then $q^{2n+4}b^4/(z(cdef)^2)=z$.

Another equality we can use is the following connecting relation between 
basic hypergeometric series on $q$, and on $q^{-1}$:
\begin{eqnarray} 
&&\qhyp{r+1}{r}{q^{-n},a_1,...,a_r}{b_1,...,b_r}{q,z}=
\qhyp{r+1}{r}{q^{n}, a^{-1}_1,..., a^{-1}_r}
{b^{-1}_1, ..., b^{-1}_r}{q^{-1}, 
\dfrac{a_1 a_2\cdots a_r}
{b_1 b_2\cdots b_r}\dfrac{z}{q^{n+1}}}\nonumber\\
&&\hspace{2cm}=\frac{(a_1,\ldots,a_r;q)_n}
{(b_1,\ldots,b_r;q)_n}
\left(-\frac zq\right)^nq^{-\binom{n}{2}}
\qhyp{r+1}{r}{q^{-n},
\frac{q^{1-n}}{b_1},...,
\frac{q^{1-n}}{b_r}}
{\frac{q^{1-n}}{a_1},...,
\frac{q^{1-n}}{a_r}}
{q,\frac{b_1\cdots b_r}{a_1\cdots a_r}\frac{q^{n+1}}{z}}.
\label{qtopiden} 
\end{eqnarray}
\noindent In order to understand the procedure for obtaining the $q$-inverse analogues of the 
basic hypergeometric orthogonal polynomials studied in this
manuscript, let's consider a special case in detail. 
Let $f_r(q):=f_r(q;z(q);{\bf a}(q),{\bf b}(q))$ be 
defined as 
\begin{equation}
\label{freqs}
f_r(q):=g_r(q)
\qhyp{r+1}{r}{q^{-n},{\bf a}(q)}{{\bf b}(q)}{q,z(q)},
\end{equation}
where 
\[
\left.
\begin{array}{c}
{\bf a}(q):=\left\{{a_1(q)},\ldots,{a_r(q)}\right\} \\[0.3cm]
{\bf b}(q):=\left\{{b_1(q)},\ldots,{b_r(q)}\right\}
\end{array}
\right\},
\]
which will suffice for instance, for the study of the terminating basic hypergeometric 
representations for the Askey--Wilson polynomials. 
In order to obtain the corresponding $q$-inverse hypergeometric representations 
of $f_r(q)$, one only needs to consider the corresponding $q$-inverted function:
\begin{equation}
f_r(q^{-1})=g_r(q^{-1})
\qhyp{r+1}{r}{q^n,{\bf a}(q^{-1})}{{\bf b}(q^{-1})}{q^{-1},z(q^{-1})}.
\label{invertedrep}
\end{equation}
\begin{thm} \label{thm:2.6}
Let $r,k\in\mathbb N_0$, $0\le k\le r$, $a_k(q)\in\CC$, $b_k(q)\not\in\Omega_q$, 
$q\in\CC^\ast$ such that $|q|\ne 1$, $z(q)\in\CC$.
Define 
a multiplier function $g_r(q):=g_r(q;z(q);{\bf a}(q);{\bf b}(q))$ which is not 
of basic hypergeometric type (some multiplicative combination of powers and $q$-Pochhammer symbols), 
and $z(q):=z(q;{\bf a}(q);{\bf b}(q))$. Then, define $f_r(q)$ as in \eqref{freqs}, and one has
\begin{equation}
\label{termreprr}
f_r(q^{-1})
=g_r(q^{-1})\qhyp{r+1}{r}{q^{-n},{\bf a}^{-1}(q^{-1})}{{\bf b}^{-1}(q^{-1})}{q,\frac{q^{n+1}a_1(q^{-1})\cdots a_r(q^{-1}) z(q^{-1})}{b_1(q^{-1})\cdots b_r(q^{-1})}},
\end{equation}
where 
\[
\left.
\begin{array}{c}
{\bf a}^{-1}(q^{-1})=\left\{\frac{1}{a_1(q^{-1})},\ldots,\frac{1}{a_r(q^{-1})}\right\} \\[0.3cm]
{\bf b}^{-1}(q^{-1})=\left\{\frac{1}{b_1(q^{-1})},\ldots,\frac{1}{b_r(q^{-1})}\right\}
\end{array}
\right\}.
\]
\end{thm}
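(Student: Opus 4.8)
The plan is to recognize the statement as a direct bookkeeping consequence of the connecting relation \eqref{qtopiden}. First I would form $f_r(q^{-1})$ from the definition \eqref{freqs} by the formal substitution $q\mapsto q^{-1}$ carried out throughout. Under this substitution the terminating parameter transforms as $q^{-n}\mapsto q^{n}$, while the multiplier $g_r$, the argument $z$, and every entry of the lists ${\bf a}$ and ${\bf b}$ are simply re-evaluated at $q^{-1}$. This produces exactly the base-$q^{-1}$ representation \eqref{invertedrep}: the factor $g_r(q^{-1})$ multiplying a ${}_{r+1}\phi_r$ whose upper parameters are $q^{n},{\bf a}(q^{-1})$, whose lower parameters are ${\bf b}(q^{-1})$, whose base is $q^{-1}$, and whose argument is $z(q^{-1})$.

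The second step is to rewrite this base-$q^{-1}$ series as a base-$q$ series by invoking the first equality of \eqref{qtopiden} read from right to left. In that equality I would make the identifications $a_k^{-1}=a_k(q^{-1})$ and $b_k^{-1}=b_k(q^{-1})$, i.e.\ $a_k=1/a_k(q^{-1})$ and $b_k=1/b_k(q^{-1})$, and I would fix the base-$q$ argument $z$ by matching the base-$q^{-1}$ argument through $\frac{a_1\cdots a_r}{b_1\cdots b_r}\frac{z}{q^{n+1}}=z(q^{-1})$. Since $\frac{a_1\cdots a_r}{b_1\cdots b_r}=\frac{b_1(q^{-1})\cdots b_r(q^{-1})}{a_1(q^{-1})\cdots a_r(q^{-1})}$, solving gives $z=\frac{q^{n+1}a_1(q^{-1})\cdots a_r(q^{-1})z(q^{-1})}{b_1(q^{-1})\cdots b_r(q^{-1})}$, which is precisely the argument recorded in \eqref{termreprr}.

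With these choices the base-$q$ series on the left of \eqref{qtopiden} has upper parameters $q^{-n},1/a_1(q^{-1}),\ldots,1/a_r(q^{-1})$, that is $q^{-n},{\bf a}^{-1}(q^{-1})$, and lower parameters $1/b_1(q^{-1}),\ldots,1/b_r(q^{-1})={\bf b}^{-1}(q^{-1})$. Attaching the untouched multiplier $g_r(q^{-1})$ then reproduces \eqref{termreprr} verbatim, which completes the argument.

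Since every step is a term-by-term application of an already-established equality, I do not anticipate a substantive obstacle; the work is purely clerical. The one point demanding care is the orientation of \eqref{qtopiden}: its first equality is stated as the expansion of a base-$q$ series into a base-$q^{-1}$ series, so I must use it in reverse, which is legitimate because it is an identity of both sides as functions of their parameters. I would also remark that the admissibility hypotheses $b_k(q^{-1})\notin\Omega_q$ and the nonvanishing of the relevant factors are preserved under $q\mapsto q^{-1}$ together with inversion of the parameters, so the two terminating sums are well-defined throughout.
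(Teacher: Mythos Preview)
Your proof is correct and essentially the same as the paper's. The paper phrases the key step as applying the $q$-Pochhammer identity \eqref{poch.id:3} term by term to the series definition \eqref{2.11} inside \eqref{invertedrep}, whereas you invoke the already-packaged connecting relation \eqref{qtopiden}; but \eqref{qtopiden} is precisely the outcome of that termwise computation, so the two arguments coincide.
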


\begin{proof}
Using the identity \eqref{poch.id:3} repeatedly with the definition 
\eqref{2.11}, in \eqref{invertedrep}, obtains the 
$q$-inverted terminating representation \eqref{termreprr} which corresponds 
to the original terminating basic hypergeometric representation
\eqref{freqs}. This completes the proof.
\end{proof}

\medskip

\noindent We will obtain new transformations for basic hypergeometric orthogonal
polynomials by taking advantage of the following remark.

\begin{rem} \label{rem:2.7}
Since $x=\cos\theta$ is an even function of $\theta$, 
all polynomials in $\cos\theta$ will be invariant under the 
map $\theta\mapsto-\theta$.
\end{rem}

\begin{rem} \label{rem:2.8}
Observe in the following discussion we will often be referring
to a collection of constants $a,b,c,d,e,f$. In such cases, which will be
clear from context, then the constant $e$ should not be confused
with Euler's number, the base of the natural logarithm, i.e.,
$\log\expe=1$.
\end{rem}

\section{The Askey--Wilson Polynomials}
\label{askeywilson}

Define the sets ${\bf 4}:=\{1,2,3,4\}$, 
${\bf a}:=(a_1,a_2,a_3,a_4)$, $a_k\in\CCast$, $k\in{\bf 4}$, and
$x=\cos\theta\in[-1,1]$.
The Askey--Wilson polynomials $p_n(x;{\bf a}|q)$ are a family of polynomials symmetric 
in four free parameters. 
These polynomials have a
long and in-depth history and their properties have been studied in detail. 
The basic hypergeometric series representation of the Askey--Wilson polynomials fall into four main categories: (1) terminating ${}_4\phi_3$ representations; (2) terminating ${}_8W_7$ representations; (3) nonterminating ${}_8W_7$ representations; and (4) nonterminating ${}_4\phi_3$ representations.

One may obtain alternative 
nonterminating representations of the Askey--Wilson polynomials using 
\cite[(2.10.7)]{GaspRah}, namely
\begin{eqnarray}
&&\hspace{-1.5cm}\qhyp43{q^{-n},q^{n-1}a_{1234},a_p\expe^{\pm i\theta}}
{\{a_{ps}\}_{s\ne p}}{q,q}=
\frac{\left(q^{1-n}\expe^{2i\theta},\frac{q^{1-n}}{a_{tu}},
\frac{q^{2-n}\expe^{i\theta}}{a_{prt}},\frac{q^{2-n}\expe^{i\theta}}
{a_{pru}};q\right)_\infty}{\left(\frac{q^{1-n}\expe^{i\theta}}
{a_t},\frac{q^{1-n}\expe^{i\theta}}{a_u},
\frac{q^{2-n}\expe^{2i\theta}}{a_{pr}},\frac{q^{2-n}}
{a_{1234}};q\right)_\infty}\nonumber\\
\label{4ph3to8W7}&&\hspace{3.5cm}\times
{}_8W_7\left(\frac{q^{1-n}\expe^{2i\theta}}{a_{pr}}
;\frac{q^{1-n}}{a_{pr}},\frac{q\expe^{i\theta}}
{a_p},\frac{q\expe^{i\theta}}{a_r},a_t\expe^{i\theta},a_u\expe^{i\theta}
;q,\frac{q^{1-n}}{a_{tu}}\right),
\end{eqnarray}
provided
$|q^{1-n}/a_{tu}|<1$.
However, there exist no
fixed values of $|a_k|, |q|<1$ such that
this very-well-poised ${}_8W_7$ is convergent for all
$n\in\mathbb N_0$. 
On the other hand, it is possible to find nonterminating ${}_8W_7$ 
representations which are convergent for all $n\in\mathbb N_0$. 
It is also possible to find nonterminating ${}_4\phi_3$ representations 
of the Askey--Wilson polynomials using Bailey's transformation
of a very-well-poised ${}_8W_7$ 
\cite[(17.9.16)]{NIST:DLMF}
\vspace{12pt}
\begin{eqnarray}
&&\hspace{-0.9cm}{}_8W_7\left(b;a,c,d,e,f;q,\frac{q^2b^2}{acdef}\right)
=\frac{(qb,\frac{qb}{de},\frac{qb}{df},\frac{qb}{ef};q)_\infty}
{(\frac{qb}{d},\frac{qb}{e},\frac{qb}{f},\frac{qb}{def};q)_\infty}
\qhyp43{\frac{qb}{ac},d,e,f}{\frac{qb}{a},\frac{qb}{c},\frac{def}{b}}
{q,q}\nonumber\\&&\label{8W7to4ph3}
\hspace{3cm}+\frac{(qb,\frac{qb}{ac},d,e,f,
\frac{q^2b^2}{adef},\frac{q^2b^2}{cdef};q)_\infty}
{(\frac{qb}{a},\frac{qb}{c},\frac{qb}{d},\frac{qb}{e},\frac{qb}{f},
\frac{q^2b^2}{acdef},\frac{def}{qb};q)_\infty}
\qhyp43{\frac{qb}{de},\frac{qb}{df},\frac{qb}{ef},\frac{q^2b^2}{acdef}}
{\frac{q^2b^2}{adef},\frac{q^2b^2}{cdef},\frac{q^2b}{def}}{q,q}.
\end{eqnarray}
\noindent However, these nonterminating representations will not be further 
discussed in this paper.

Different series representations are useful for obtaining different 
properties and formulae for these polynomials. So it is very useful 
to have at hand an exhaustive list.
The discussion contained in this section is an attempt to summarize, 
in an in-depth manner, an exhaustive description of the representation 
and transformation properties of the {terminating}
${}_4\phi_3$ and ${}_8W_7$ basic hypergeometric representations of 
the Askey--Wilson polynomials. 

\subsection{The Askey--Wilson Polynomial Representations}

\begin{thm} \label{thm:3.1}
Let $n\in\mathbb N_0$, $p,s,r,t,u\in {\bf 4}$, $p,r,t,u$ distinct and fixed, 
$q\in\CCast$ such that $|q|\ne 1$.
Then, the Askey--Wilson polynomials have the following terminating basic 
hypergeometric series representations given by:
\begin{eqnarray}
\hspace{-0.50cm}
\label{aw:def1} p_n(x;{\bf a}|q) &&:= a_p^{-n} \left(\{a_{ps}\}_{s \neq p};q\right)_n 
\qhyp43{q^{-n},q^{n-1}a_{1234}, a_p\expe^{\pm i\theta}}{\{a_{ps}\}_{s \neq p}}{q,q} \\
\label{aw:def2} &&=q^{-\binom{n}{2}} (-a_p)^{-n} 
\frac{\left(\frac{a_{1234}}{q};q\right)_{2n}
\left(a_p \expe^{\pm i\theta};q\right)_n}
{\left(\frac{a_{1234}}{q};q\right)_n}
\qhyp43{q^{-n}, \left\{\frac{q^{1-n}}{a_{ps}}\right\}_{s \neq p}}
{\frac{q^{2-2n}}{a_{1234}}, \frac{q^{1-n}\expe^{\pm i\theta}}{a_p}}{q,q} \\
\label{aw:def3} &&=\expe^{in\theta} \left(a_{pr},a_t\expe^{-i\theta},a_u\expe^{-i\theta};q\right)_n 
\qhyp43{q^{-n}, a_p\expe^{i\theta}, a_r\expe^{i\theta}, \frac{q^{1-n}}{a_{tu}}}{a_{pr}, \frac{q^{1-n}\expe^{i\theta}}{a_t}, 
\frac{q^{1-n}\expe^{i\theta}}{a_u}}{q,q} \\
&&=\expe^{in\theta}
\dfrac{\left(\frac{a_{1234}}{q};q\right)_{2n}\left(\left\{a_s\expe^{-i\theta}\right\}_{s\ne p}
,\frac{a_{1234}\,\expe^{-i\theta}}{q a_p};q\right)_{n}}
{\left(\frac{a_{1234}}{q};q\right)_{n}
\left(\frac{a_{1234}\,\expe^{-i\theta}}{q a_p};q\right)_{2n}}
\nonumber \\
&&\label{aw:def6}
\hspace{3cm}\times\,{}_8W_7\left(
\frac{q^{1-2n}a_p\expe^{i\theta}}{a_{1234}};q^{-n},
\left\{\frac{q^{1-n}a_{ps}}{a_{1234}}\right\}_{s\ne p},
a_p\expe^{i\theta};q,
\frac{q\expe^{i\theta}}{a_p}\right)\\
&&\label{aw:def7}
=\expe^{in\theta}
\dfrac{\left(a_p\expe^{-i\theta},\{\frac{a_{1234}}{a_{ps}}\}_{s\ne p};q\right)_n}
{\left(\frac{a_{1234}\,\expe^{i\theta}}{a_p};q\right)_{n}}
{}_8W_7\left(
\frac{a_{1234}\expe^{i\theta}}{qa_p};q^{-n},
\{a_s\expe^{i\theta}\}_{s\ne p},q^{n-1}a_{1234};q,
\frac{q\expe^{-i\theta}}{a_p}\right)\\
&&\label{aw:def5}=a_p^{-n}\dfrac{\left(a_{pt},a_{pu},a_r\expe^{\pm i\theta};q\right)_n}{\left(\frac{a_r}{a_p};q\right)_n} 
\,{}_8W_7\left(
\frac{q^{-n}a_p}{a_r};q^{-n},\!
\frac{q^{1-n}}{a_{rt}}, \frac{q^{1-n}}{a_{ru}}, a_p\expe^{\pm i\theta};q,q^n a_{tu}\right)\\
\label{aw:def4} &&=\expe^{in\theta} \frac{\left(\{a_s\expe^{-i\theta}\};q\right)_n}{\left(\expe^{-2i\theta};q\right)_n} 
\,{}_8W_7\left(
q^{-n}\expe^{2i\theta};q^{-n},
\{a_s\expe^{i\theta}\};
q,\frac{q^{2-n}}{a_{1234}}
\right).
\end{eqnarray}
\label{AWthm}
\end{thm}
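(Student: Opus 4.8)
The plan is to treat \eqref{aw:def1} as the defining representation and to deduce the remaining six identities from it by a short chain of standard terminating-series transformations, organized so that the three ${}_4\phi_3$ forms \eqref{aw:def1}--\eqref{aw:def3} are settled first and the four very-well-poised ${}_8W_7$ forms \eqref{aw:def6}, \eqref{aw:def7}, \eqref{aw:def5}, \eqref{aw:def4} are then obtained from them. The one structural fact I would record at the outset is that the ${}_4\phi_3$ in \eqref{aw:def1} is balanced: with lower parameters $\{a_{ps}\}_{s\ne p}$ one has $q\cdot q^{-n}\cdot q^{n-1}a_{1234}\cdot a_p^2 = a_{pr}a_{pt}a_{pu} = a_p^2 a_{1234}$, so the series is $1$-balanced with argument $q$. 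This is what makes every transformation below close up with argument $q$ (or its reciprocal).

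For \eqref{aw:def2} I would apply the inversion Corollary \ref{cor:2.4} directly to the ${}_4\phi_3$ of \eqref{aw:def1}. Balancedness forces the transformed argument to collapse to $q$, the lower parameters $\{a_{ps}\}$ map to the top parameters $q^{-n},\{q^{1-n}/a_{ps}\}_{s\ne p}$ while $q^{n-1}a_{1234},a_p\expe^{\pm i\theta}$ map to the bottom parameters $q^{2-2n}/a_{1234},q^{1-n}\expe^{\pm i\theta}/a_p$, matching \eqref{aw:def2} exactly. The only nontrivial point is the prefactor: combining the $a_p^{-n}(\{a_{ps}\};q)_n$ of \eqref{aw:def1} with the ratio produced by \eqref{cor:2.4:r1} cancels the $(\{a_{ps}\};q)_n$ and leaves $(q^{n-1}a_{1234};q)_n$, which I would rewrite as $(a_{1234}/q;q)_{2n}/(a_{1234}/q;q)_n$ using the splitting \eqref{poch.id:4}. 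For \eqref{aw:def3} I would use Sears' transformation of the terminating balanced ${}_4\phi_3$ --- which in the present setting can be produced by composing Watson's transformation \eqref{WatqWhipp} with Bailey's transformation \eqref{8W7to4ph3}, the second ${}_4\phi_3$ on the right of \eqref{8W7to4ph3} dropping out because the series terminates --- so as to interchange the role of $q^{n-1}a_{1234}$ with one of the $a_p\expe^{\pm i\theta}$ and thereby introduce the parameter $q^{1-n}/a_{tu}$; Remark \ref{rem:2.7} ($\theta\mapsto-\theta$) is available to fix the placement of $\expe^{\pm i\theta}$.

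The four ${}_8W_7$ representations would all come from Watson's $q$-analogue of Whipple's theorem \eqref{WatqWhipp}, applied to the three ${}_4\phi_3$ forms with different choices of which top parameter plays the distinguished role $a$ (the base of the resulting ${}_8W_7$ being $de/(qa)$) and which lower parameter plays $f$. Concretely I expect \eqref{aw:def5} from \eqref{aw:def1}, \eqref{aw:def4} from \eqref{aw:def3} (the choice $a=q^{1-n}/a_{tu}$, with $d,e=q^{1-n}\expe^{i\theta}/a_t,\,q^{1-n}\expe^{i\theta}/a_u$ yields base $q^{-n}\expe^{2i\theta}$ and the fully symmetric parameter list $\{a_s\expe^{i\theta}\}$), and \eqref{aw:def6}, \eqref{aw:def7} from \eqref{aw:def2} and \eqref{aw:def1} respectively. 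In each case I would rewrite the output in the $W$-notation via \eqref{eq:2.13}, and reduce the infinite $q$-Pochhammer ratio produced by \eqref{WatqWhipp} to the finite products displayed, using balancedness together with \eqref{poch.id:4} and the quadratic identity \eqref{poch.id:7} (this is where the $(a_{1234}/q;q)_{2n}$ factors in \eqref{aw:def6} arise).

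I expect the main obstacle to be purely the prefactor bookkeeping rather than the choice of transformation: one must verify for each line that the relevant balance and termination (non-vanishing-denominator) hypotheses hold, that the collapsed argument agrees with the one displayed, and that the tangle of infinite and doubled-index $q$-Pochhammer symbols genuinely simplifies to the stated finite products. The quadratic and doubling identities \eqref{poch.id:7}--\eqref{poch.id:8} and the splitting \eqref{poch.id:4} are the tools that carry this out, and keeping track of the signs and $q^{\binom n2}$ powers coming from \eqref{cor:2.4:r1} is the most error-prone part.
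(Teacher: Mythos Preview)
Your plan is sound and would succeed; it differs from the paper only in the specific wiring of the transformations. The paper also derives \eqref{aw:def2} by the inversion Corollary~\ref{cor:2.4}, and \eqref{aw:def3} by citing Sears' balanced ${}_4\phi_3$ transformation directly rather than synthesizing it from \eqref{WatqWhipp} plus \eqref{8W7to4ph3} (your route works but is unnecessarily circuitous, and needs a little care since \eqref{8W7to4ph3} is a nonterminating identity). For the very-well-poised forms, the paper obtains \eqref{aw:def7} from \eqref{aw:def1} by Watson \eqref{WatqWhipp} together with $\theta\mapsto-\theta$, and then gets \eqref{aw:def6} from \eqref{aw:def7} via the ${}_8W_7$ inversion \eqref{inv87}; your route to \eqref{aw:def6} through Watson on \eqref{aw:def2} is equally valid. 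Your derivations of \eqref{aw:def5} from \eqref{aw:def1} and of \eqref{aw:def4} from \eqref{aw:def3} via a single application of \eqref{WatqWhipp} are in fact cleaner than the paper's, which invokes Gasper--Rahman (III.19) (and, for \eqref{aw:def4}, (III.15) as well). One point you glossed over: to land on \eqref{aw:def7} (and likewise \eqref{aw:def6}) with distinguished index $p$, Watson must be applied to the ${}_4\phi_3$ form with a \emph{different} distinguished index (the paper writes this as $p\leftrightarrow r$); the symmetry of $p_n$ in its four parameters is what legitimizes that step, and you should make it explicit when you carry out the computation.
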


\begin{proof}
The standard definition of the Askey--Wilson polynomials \eqref{aw:def1} is found in 
many places including (\cite[(14.1.1)]{Koekoeketal}).
The representation \eqref{aw:def2} can be derived by applying \eqref{inversion} 
to \eqref{aw:def1}. It also follows
by using \cite[second equality in (17.9.14)]{NIST:DLMF} with $a=a_{1234}q^{n-1}$, 
$\{b,c\}=a_p\expe^{\pm i\theta}$, $\{d,e,f\}=\{a_{ps}\}_{s\ne p}$.
One can obtain \eqref{aw:def3} by starting with \eqref{aw:def1} and $p\leftrightarrow u$
using \cite[second equality in (17.9.14)]{NIST:DLMF}  
with $\{a,b\}=a_u\expe^{\mp i\theta}$, $c=a_{1234}q^{n-1}$, $\{d,e,f\}=\{a_{us}\}_{s\ne u}$,
and \cite[(1.8.14)]{Koekoeketal}.
The representation \eqref{aw:def6} follows by
from \eqref{aw:def7} by using the inversion
formula \eqref{inv87}.
The representation \eqref{aw:def7} follows from
\eqref{aw:def1} with $p\leftrightarrow r$, using Watson's $q$-analogue of Whipple's
theorem \eqref{WatqWhipp} and mapping $\theta\mapsto-\theta$.
The representation \eqref{aw:def5} follows by using \cite[(III.19)]{GaspRah} 
directly with \eqref{aw:def1}.
The representation \eqref{aw:def4} follows from \eqref{aw:def1} 
and \cite[ (III.15), (III.19)]{GaspRah} (see also \cite[\S 14.1]{KoornwinderKLSadd}).
This completes the proof.
\end{proof}

Note that when Corollary \ref{cor:2.4} is applied to \eqref{aw:def1}
one obtains \eqref{aw:def2}, and when one 
applies it to \eqref{aw:def3}, one obtains
the same formula back with $\theta\mapsto-\theta$ and $\{r,s\}\leftrightarrow\{t,u\}$.

\begin{rem} \label{rem:3.2}
Applying \eqref{inversion} to \eqref{aw:def3},
\eqref{aw:def6}, \eqref{aw:def7}, \eqref{aw:def4} simply takes 
$\theta \mapsto -\theta$, and applying it to \eqref{aw:def5} interchanges $a_p$ and $a_r$. 
Mapping $\theta \mapsto -\theta$ may give additional representations, however those are omitted.
\end{rem}


\subsection{Terminating 4-Parameter Symmetric Transformations}

\zoro{
\begin{cor}
\label{cor:3.3}
Let $n\in\mathbb N_0$, $b,c,d,e,f,q\in\CCast$, 
such that $|q|\ne 1$.
Then, one has 
the following transformation
formulas for a terminating ${}_8W_7$:
\begin{eqnarray}
&&\label{aw:def4to5}\hspace{-0.45cm}{}_8W_7\left(b;q^{-n},c,d,e,f;q,\frac{q^{n+2}b^2}{cdef}\right)\\
&&\hspace{-0.2cm}\label{cor3.5a.1}=q^{\binom{n}{2}}\!
\left(\frac{-q^2b^2}{cdef}\right)^{\!n}\!\!\!\dfrac{\left(qb,b, c, d, e, f;q\right)_n}
{(b;q)_{2n}\!
\left(\frac{qb}{c},\!\frac{qb}{d},\!\frac{qb}{e},\!\frac{qb}{f};q\right)_{\!n}} 
{}_8W_7\!\left(\frac{q^{-2n}}{b};
q^{-n},\!\frac{q^{-n}c}{b},\!
\frac{q^{-n}d}{b},\!\frac{q^{-n}e}{b},\!\frac{q^{-n}f}{b};
q,\frac{q^{n+2}b^2}{cdef}
\right)\\
&&\hspace{-0.2cm}\label{cor3.5a.2}=\frac{\left(\frac{qb}{ce},\frac{qb}{cf},qb,d;q\right)_n}
{\left(\frac{qb}{c},\frac{qb}{e},\frac{qb}{f},\frac{d}{c};q\right)_n}
{}_8W_7\left(\frac{q^{-n}c}{d};q^{-n},\frac{q^{-n}c}{b},
\frac{qb}{de},\frac{qb}{df},c;q,\frac{ef}{b}\right)\\
&&\hspace{-0.2cm}\label{cor3.5a.7}
=\frac{\left(\frac{qb}{de},\frac{qb}{df},\frac{qb}{ef},qb;q\right)_n}
{\left(\frac{qb}{def},\frac{qb}{d},\frac{qb}{e},\frac{qb}{f};q\right)_n}
{}_8W_7\left(\frac{q^{-n-1}def}{b};q^{-n},d,e,f,\frac{q^{-n-1}cdef}{b^2};q,\frac{q}{c}\right)
\\
&&\hspace{-0.2cm}\label{cor3.5a.7b}
=\frac{\left(
\frac{q^2b^2}{cdef},qb,d,e,f;q\right)_n}
{\left(\frac{def}{qb},\frac{qb}{c},\frac{qb}{d},\frac{qb}{e},\frac{qb}{f};q\right)_n}
{}_8W_7\left(\frac{q^{1-n}b}{def};q^{-n},\frac{q^{-n}c}{b},
\frac{qb}{de},\frac{qb}{df},\frac{qb}{ef};q,\frac{q}{c}\right)
\\[-0.0cm]
&&\hspace{-0.2cm}\label{cor3.5a.6}
=\frac{\left(\frac{q^2b^2}{cdef},{qb};q\right)_n}
{\left(\frac{qb}{c},\frac{q^2b^2}{def};q\right)_n}
{}_8W_7\left(\frac{qb^2}{def};q^{-n},\frac{qb}{de},
\frac{qb}{df},\frac{qb}{ef},c;q,\frac{q^{n+1}{b}}{c}\right)
\\
&&\hspace{-0.2cm}\label{cor3.5a.7c}
=q^{\binom{n}{2}}
\left(-\frac{qb}{c}\right)^n\frac{\left(
\frac{qb^2}{def},
\frac{qb}{ef},\frac{qb}{de},\frac{qb}{df},qb,c;q\right)_n}
{\left(\frac{qb^2}{def};q\right)_{2n}\left(\frac{qb}{c},\frac{qb}{d},\frac{qb}{e},\frac{qb}{f};q\right)_n}
\nonumber\\[-0.00cm]
&&\hspace{3cm}\times{}_8W_7\left(\frac{q^{-2n-1}def}{b^2};q^{-n},\frac{q^{-n}d}{b},
\frac{q^{-n}e}{b},\frac{q^{-n}f}{b},\frac{q^{-n-1}cdef}{b^2};q,\frac{q^{n+1}b}{c}\right)
\\
&&\hspace{-0.2cm}\label{cor3.5a.3}=\frac{\left(\frac{qb}{cd},qb;q\right)_n}{\left(\frac{qb}{c},\frac{qb}{d};q\right)_n}\qhyp43{q^{-n},\frac{qb}{ef},c,d}{\frac{q^{-n}cd}{b},\frac{qb}{e},\frac{qb}{f}}{q,q}\\
&&\hspace{-0.2cm}\label{cor3.5a.4}=
\left(\frac{qb}{ef}\right)^n
\dfrac{\left(\frac{qb}{cd},qb,e, f; q\right)_n}{\left(\frac{qb}{c},\frac{qb}{d},\frac{qb}{e}, \frac{qb}{f};q\right)_n} 
\qhyp43{q^{-n}, \frac{q^{-n}c}{b}, 
\frac{q^{-n}d}{b}, \frac{qb}{ef}}{\frac{q^{-n}cd}b, \frac{q^{1-n}}{e}, \frac{q^{1-n}}{f}}{q,q}\\
&&\hspace{-0.2cm}\label{cor3.5a.5}=\frac{\left(\frac{q^2b^2}{cdef},qb,c;q\right)_n}
{\left(\frac{qb}{d},\frac{qb}{e},\frac{qb}{f};q\right)_n}
\qhyp43{q^{-n},\frac{qb}{cd},\frac{qb}{ce},\frac{qb}{cf}}
{\frac{q^2b^2}{cdef},\frac{q^{1-n}}{c},\frac{qb}{c}}{q,q}\\
&&\hspace{-0.2cm}\label{cor3.5a.6b}=c^n\frac{\left(\frac{qb}{cd},\frac{qb}{ce},\frac{qb}{cf},qb;q\right)_n}
{\left(\frac{qb}{c},\frac{qb}{d},\frac{qb}{e},\frac{qb}{f};q\right)_n}
\qhyp43{q^{-n},\frac{q^{-n-1}cdef}{b^2},\frac{q^{-n}c}{b},c}{\frac{q^{-n}cd}{b},\frac{q^{-n}ce}{b},\frac{q^{-n}cf}{b}}{q,q}.
\end{eqnarray}
\end{cor}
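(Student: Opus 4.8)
The plan is to recognize the left-hand side as a disguised Askey--Wilson polynomial and to read off all ten identities from the single list of representations in Theorem~\ref{thm:3.1}. Concretely, I would make the substitution
\[
b=q^{-n}\expe^{2i\theta},\qquad \{c,d,e,f\}=\{a_1\expe^{i\theta},a_2\expe^{i\theta},a_3\expe^{i\theta},a_4\expe^{i\theta}\},
\]
under which $a_{1234}=q^{-2n}cdef/b^2$ and hence the balanced argument of \eqref{aw:def4to5} becomes $q^{n+2}b^2/(cdef)=q^{2-n}/a_{1234}$. With this identification the left-hand side is exactly the very-well poised series appearing in representation \eqref{aw:def4}, so it equals $p_n(x;{\bf a}|q)$ divided by the prefactor $P_4:=\expe^{in\theta}(\{a_s\expe^{-i\theta}\};q)_n/(\expe^{-2i\theta};q)_n$. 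Since the map $(a_1,a_2,a_3,a_4,\theta)\mapsto(b,c,d,e,f)$ is generically invertible and both sides of each asserted identity are rational in $b,c,d,e,f,q$ (every series terminates), it suffices to verify the identities after this substitution; the general case then follows by the rational-identity principle.

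Next I would match the six \emph{direct} lines to the remaining representations of Theorem~\ref{thm:3.1}, taking $p=1$ and $(p,r,t,u)=(1,2,3,4)$ throughout. Short computations of the type $a_p\expe^{i\theta}=c$, $a_p\expe^{-i\theta}=q^{-n}c/b$, $a_{rs}=q^{-n}(\cdots)/b$, $q^{1-n}/a_{rs}=qb/(\cdots)$ show that the ${}_4\phi_3$ (resp.\ ${}_8W_7$) appearing in \eqref{aw:def1}, \eqref{aw:def2}, \eqref{aw:def3}, \eqref{aw:def5}, \eqref{aw:def6}, \eqref{aw:def7} becomes literally the series displayed in \eqref{cor3.5a.6b}, \eqref{cor3.5a.5}, \eqref{cor3.5a.3}, \eqref{cor3.5a.2}, \eqref{cor3.5a.6}, \eqref{cor3.5a.7}, respectively. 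Dividing each representation's prefactor by $P_4$ must then reproduce the displayed scalar prefactor; this is where the $q$-Pochhammer bookkeeping lives, and I would carry it out with \eqref{poch.id:4} and especially \eqref{poch.id:5}, using the latter to reverse the $(q^{-n}\,\cdot\,;q)_n$ and $(\expe^{-2i\theta};q)_n$ factors into the $(qb/\,\cdot\,;q)_n$ forms, and noting that the $\expe^{in\theta}$, the two $q^{\binom{n}{2}}$, and all sign factors cancel in the ratio.

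The remaining four lines are inversions. Line \eqref{cor3.5a.1} is the balanced special case of the inversion \eqref{inv87} applied directly to the left-hand side: since the series is balanced the transformed argument returns to $q^{n+2}b^2/(cdef)$ (as observed immediately after \eqref{inv87}), and the prefactor collapses to the stated one after using \eqref{poch.id:7} and \eqref{poch.id:8} to rewrite $(\pm q\sqrt b,\pm\sqrt b;q)_n$ and $(b;q)_{2n}$. Likewise \eqref{cor3.5a.4} is obtained by applying Corollary~\ref{cor:2.4} to \eqref{cor3.5a.3}, and \eqref{cor3.5a.7b}, \eqref{cor3.5a.7c} by applying \eqref{inv87} to \eqref{cor3.5a.7} and \eqref{cor3.5a.6}; in each balanced case the argument is preserved, so only the first parameter, the five numerator parameters, and the prefactor need to be checked against the inversion formula. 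Equivalently, these four lines are the images under inversion of representations \eqref{aw:def3}, \eqref{aw:def7}, \eqref{aw:def6} in the sense of Remark~\ref{rem:3.2}.

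The main obstacle is entirely the prefactor algebra, not the structure: matching the hypergeometric arguments is a direct substitution, but verifying that each prefactor ratio $(\text{representation prefactor})/P_4$ collapses to the compact form shown — particularly for the three very-well poised lines \eqref{cor3.5a.1}, \eqref{cor3.5a.6}, \eqref{cor3.5a.7c}, where the double $q$-shifted factorials $(a_{1234}/q;q)_{2n}$ and the $(\pm\sqrt b;q)_n$ factors must be split via \eqref{poch.id:4}, \eqref{poch.id:7}, \eqref{poch.id:8} and recombined — is lengthy and error-prone, and is the step I would organize most carefully, for instance by reducing every factor to the single base $q$ and tracking the powers of $q$, of $b$, and the overall sign separately.
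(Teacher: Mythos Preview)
Your approach is essentially the same as the paper's: identify the left-hand side as the Askey--Wilson representation \eqref{aw:def4} via the substitution $\expe^{2i\theta}=q^nb$, $a_s\expe^{i\theta}\in\{c,d,e,f\}$ (the paper writes the inverse substitution $a_p=q^{-n/2}c/\sqrt{b}$, etc.), and then read off the remaining ten lines from the other representations in Theorem~\ref{thm:3.1}, using $\theta\mapsto-\theta$ (equivalently, inversion via Remark~\ref{rem:3.2}) for the four ``extra'' lines; the paper packages the prefactor bookkeeping by multiplying every representation by a single explicit factor ${\sf A}_n$, which is exactly your ``divide by $P_4$''. One small correction: applying Corollary~\ref{cor:2.4} to \eqref{cor3.5a.3} literally produces \eqref{cor3.5a.4} with $\{c,d\}\leftrightarrow\{e,f\}$ (cf.\ the sentence preceding Remark~\ref{rem:3.2}), so to land on \eqref{cor3.5a.4} as written it is cleaner to take \eqref{aw:def3} with $\theta\mapsto-\theta$ directly, as the paper does.
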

}

\begin{proof}
Start with Theorem \ref{thm:3.1} and set $\expe^{2i\theta}=q^nb$, $a_p=q^{-\frac{n}{2}}\frac{c}{\sqrt{b}}$,
$a_r=q^{-\frac{n}{2}}\frac{d}{\sqrt{b}}$,
$a_t=q^{-\frac{n}{2}}\frac{e}{\sqrt{b}}$,
$a_u=q^{-\frac{n}{2}}\frac{f}{\sqrt{b}}$,
\yoro{setting $\theta\mapsto-\theta$ where
necessary.}
Then, multiply every formula by the factor
\[
{\sf A}_n(b,c,d,e,f|q):=
\frac{q^{2\binom{n}{2}}
(-1)^n(qb)^\frac{5n}{2}\left(qb;q\right)_n
}
{
(cdef)^n\left(\frac{qb}{c},\frac{qb}{d},
\frac{qb}{e},\frac{qb}{f};q\right)_n
}.
\]
\noindent With simplification, this completes the proof.
\end{proof}

\begin{rem} \label{rem:3.4}
Notice that in Corollary \ref{cor:3.3}, our order
of the representations begins with the principal ${}_8W_7$ representation 
in which the symmetry in the parameters
$c,d,e,f$ is evident and ends with the representation 
corresponding to the classical 
${}_4\phi_3$ basic hypergeometric representation of the Askey--Wilson 
polynomials \eqref{aw:def1}. On the other hand, in Theorem \ref{AWthm},
we have reversed the order of the corresponding 
representations. The reason why we have used the
ordering as such is because the ${}_4\phi_3$ representation
of the Askey--Wilson polynomials  \eqref{aw:def1} is historically first 
(see \cite[(1.8)]{IsmailWilson82}  and the memoir
\cite[(1.15)]{AskeyWilson85} and is certainly the most
common (see e.g., \cite[(14.1.1)]{Koekoeketal})).
The Askey--Wilson polynomials are
symmetric in their four parameters, the ${}_8W_7$ 
representation in which this symmetry is evident
demonstrates this symmetry.
On the other hand, the 
polynomial nature of the 
Askey--Wilson polynomials 
is not clearly evident
from the ${}_8W_7$ representation. In the first ${}_4\phi_3$ representation, 
the polynomial nature of evident.
\end{rem}

\subsection{Terminating 4-Parameter Symmetric Interchange Transformations}

The evidence that the first (and second) ${}_8W_7$ in Corollary \ref{cor:3.3} are symmetric
in the variables $c,d,e,f$ is clear. Therefore, all of the
formulas in this corollary are invariant under
the interchange of any two of those variables. This is true whether 
the symmetry between those variables is
evident in the corresponding mathematical expression or not. 
Perhaps, the most famous parameter interchange transformation 
of this sort is Sears' balanced 
${}_4\phi_3$ transformations \cite[(17.9.14)]{NIST:DLMF}  which 
demonstrate the invariance (and provide specific transformation
formulas) of the Askey--Wilson polynomials under parameter interchange. 
Other interesting parameter interchange transformations of this type 
can be obtained, such as by \eqref{cor3.5a.2} with
$c\leftrightarrow d$ (preserves the argument), 
$c\leftrightarrow e$, 
$c\leftrightarrow f$, 
$d\leftrightarrow e$, 
$d\leftrightarrow f$ 
interchanged (the invariance under the interchange
$e\leftrightarrow f$ is evident). Furthermore, when 
the symmetry within a set of variables is
evident in the transformation corollaries presented below,
then due to this symmetry, non-trivial 
transformation formulas can be obtained
by equating the two expressions with certain variables interchanged. 

In this
subsection we present the entirety of all of the parameter interchange
transformations for terminating basic hypergeometric transformations 
which arise from the Askey--Wilson polynomials.

\zoro{
\begin{cor}
\label{cor:3.5}
Let $n\in\mathbb N_0$, $q,b,c,d,e,f\in\CCast$,
such that $|q|\ne 1$.
Then, one has 
the following
parameter interchange transformations for a terminating ${}_8W_7$:
\begin{eqnarray}
&&\hspace{-2.2cm}\label{cor3.5:r1}
{}_8W_7\left(\frac{q^{-n}c}{d};q^{-n},\frac{q^{-n}c}{b},
\frac{qb}{de},\frac{qb}{df},c;q,\frac{ef}{b}\right)\\
&&\hspace{-0.2cm}\label{cor3.5:r2}=
\frac{\left(\frac{qb}{de},\frac{qb}{df},\frac{qb}{c},\frac{d}{c},c;q\right)_n}
{\left(\frac{qb}{ce},\frac{qb}{cf},\frac{qb}{d},\frac{c}{d},d;q\right)_n}\,
{}_8W_7\left(\frac{q^{-n}d}{c};q^{-n},\frac{q^{-n}d}{b},
\frac{qb}{ce},\frac{qb}{cf},\boro{d};q,\frac{ef}{b}\right)\\
&&\hspace{-0.2cm}\label{cor3.5:r3}=
\boro{\frac
{\left(\frac{qb}{cd},\frac{qb}{e},\frac{d}{c},e;q\right)_n}
{\left(\frac{qb}{ce},\frac{qb}{d},\frac{e}{c},d;q\right)_n}\,}
{}_8W_7\left(\frac{q^{-n}c}{e};q^{-n},\frac{q^{-n}c}{b},
\frac{qb}{ed},\frac{qb}{ef},c;q,\frac{df}{b}\right)\\
&&\hspace{-0.2cm}\label{cor3.5:r4}=
\boro{\frac{\left(\frac{qb}{ed},\frac{qb}{ef},\frac{qb}{c},\frac{{d}}{c},c;q\right)_n}
{\left(\frac{qb}{c{e}},\frac{qb}{cf},\frac{qb}{{d}},\frac{c}{e},{d};q\right)_n}\,}
{}_8W_7\left(\frac{q^{-n}e}{c};q^{-n},\frac{q^{-n}e}{b},
\frac{qb}{cd},\frac{qb}{cf},e;q,\frac{df}{b}\right)\\
&&\hspace{-0.2cm}\label{cor3.5:r5}=
\boro{\frac{\left(\frac{qb}{{cd}},\frac{qb}{{f}},\frac{{d}}{{c}},f;q\right)_n}
{\left(\frac{qb}{{c}f},\frac{qb}{{d}}\frac{f}{c},{d};q\right)_n}\,}
{}_8W_7\left(\frac{q^{-n}c}{f};q^{-n},\frac{q^{-n}c}{b},
\frac{qb}{ef},\frac{qb}{df},c;q,\frac{de}{b}\right)
\end{eqnarray}
\begin{eqnarray}
&&\boro{\hspace{-0.2cm}\label{cor3.5:r6}=
\frac{\left(\frac{qb}{fd},\frac{qb}{fe},\frac{qb}{c},\frac{d}{c},c;q\right)_n}
{\left(\frac{qb}{ce},\frac{qb}{cf},\frac{qb}{d},\frac{c}{f},d;q\right)_n}
{}_8W_7\left(\frac{q^{-n}f}{c};q^{-n},\frac{q^{-n}f}{b},
\frac{qb}{cd},\frac{qb}{ce},f;q,\frac{de}{b}\right)}\\
&&\hspace{-0.2cm}\label{cor3.5:r7}\boro{=\frac
{\left(\frac{qb}{ef},\frac{d}{c};q\right)_n}
{\left(\frac{qb}{cf},\frac{d}{e};q\right)_n}
{}_8W_7\left(\frac{q^{-n}e}{d};q^{-n},\frac{q^{-n}e}{b},
\frac{qb}{dc},\frac{qb}{df},e;q,\frac{cf}{b}\right)}\\
&&\hspace{-0.2cm}\label{cor3.5:r8}=\frac
{\left(\frac{qb}{dc},\frac{qb}{df},\boro{\frac{qb}{e}},\frac{d}{c},e;q\right)_n}
{\left(\frac{qb}{ce},\frac{qb}{cf},\frac{qb}{d},\frac{e}{d},d;q\right)_n}
{}_8W_7\left(\frac{q^{-n}d}{e};q^{-n},\frac{q^{-n}d}{b},
\frac{qb}{ec},\frac{qb}{ef},d;q,\frac{cf}{b}\right)\\
&&\hspace{-0.2cm}\label{cor3.5:r9}=
\boro{\frac{\left(\frac{qb}{ef},\frac{d}{c};q\right)_n}
{\left(\frac{qb}{ce},\frac{d}{f};q\right)_n}
{}_8W_7\left(\frac{q^{-n}f}{d};q^{-n},\frac{q^{-n}f}{b},
\frac{qb}{dc},
\frac{qb}{de},
f;q,\frac{ce}{b}\right)}\\
&&\hspace{-0.2cm}\label{cor3.5:r10}=
\frac{\left(\frac{qb}{de},\boro{\frac{qb}{dc},\frac{qb}{f},\frac{d}{c},f};q\right)_n}
{\left(\frac{qb}{ce},\boro{\frac{qb}{cf},\frac{qb}{d}},\frac{f}{d},\boro{d};q\right)_n}
{}_8W_7\left(\frac{q^{-n}d}{f};q^{-n},\frac{q^{-n}d}{b},
\frac{qb}{fc},
\frac{qb}{fe},
d;q,\frac{ce}{b}\right)\\
&&\hspace{-0.2cm}\label{cor3.5:r11}=
\boro{\frac{\left(\frac{qb}{ed},\frac{qb}{f},\frac{d}{c},f;q\right)_n}
{\left(\frac{qb}{cf},\frac{qb}{d},\frac{f}{e},d;q\right)_n}
{}_8W_7\left(\frac{q^{-n}e}{f};q^{-n},\frac{q^{-n}e}{b},
\frac{qb}{fc},\frac{qb}{fd},
e;q,\frac{cd}{b}\right)}\\
&&\hspace{-0.2cm}\label{cor3.5:r12}=
\boro{
\frac{\left(\frac{qb}{df},\frac{qb}{e},\frac{d}{c},e;q\right)_n}
{\left(\frac{qb}{ce},\frac{qb}{d},\frac{e}{f},d;q\right)_n}
{}_8W_7\left(\frac{q^{-n}f}{e};q^{-n},\frac{q^{-n}f}{b},
\frac{qb}{ec},\frac{qb}{ed},
f;q,\frac{cd}{b}\right)}
.
\end{eqnarray}
\end{cor}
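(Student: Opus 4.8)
The plan is to derive all twelve identities of Corollary~\ref{cor:3.5} from the single observation that the common value of the chain of equalities in Corollary~\ref{cor:3.3} is a symmetric function of $c,d,e,f$. Indeed, the head of that chain, the principal series \eqref{aw:def4to5}, is a very-well-poised ${}_8W_7(b;q^{-n},c,d,e,f;q,\frac{q^{n+2}b^2}{cdef})$ in which $c,d,e,f$ occur as four ordinary numerator parameters with paired denominators $\frac{qb}{c},\ldots,\frac{qb}{f}$, while the argument depends on them only through the symmetric product $cdef$. Hence any permutation of $c,d,e,f$ permutes the numerator parameters among themselves and the denominator parameters among themselves, leaving the series and its argument invariant. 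Writing $\Phi(c,d,e,f)$ for this common value, I conclude that $\Phi$ is invariant under the full symmetric group acting on $\{c,d,e,f\}$.

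Next I would use the representation \eqref{cor3.5a.2}, which factors $\Phi(c,d,e,f)=P(c,d,e,f)\,W(c,d,e,f)$, where $W(c,d,e,f)$ is exactly the terminating ${}_8W_7$ of \eqref{cor3.5:r1} and $P(c,d,e,f)$ is its $q$-Pochhammer prefactor. Both $W$ and $P$ are manifestly invariant under interchanging the two parameters that play the ``passive'' role (for the identity assignment, $e\leftrightarrow f$): in $W$ this merely swaps the two ordinary parameters $\frac{qb}{de},\frac{qb}{df}$ while fixing the base and the argument $\frac{ef}{b}$, and in $P$ it only reorders Pochhammer factors. Consequently the $4!$ rearrangements of $c,d,e,f$ collapse into $4!/2=12$ distinct forms of $W$, indexed by the ordered choice of the two parameters occupying the distinguished slots (the base $\frac{q^{-n}c}{d}$ and the trailing numerator $c$), with the remaining two entering symmetrically through the argument.

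The identities then follow by equating \eqref{cor3.5a.2} for the identity assignment with \eqref{cor3.5a.2} for each of the twelve assignments $\tau$: since $\Phi$ is symmetric, $P(\mathrm{id})\,W(\mathrm{id})=P(\tau)\,W(\tau)$, so $W(\mathrm{id})=\frac{P(\tau)}{P(\mathrm{id})}\,W(\tau)$. This is precisely the shape of each line \eqref{cor3.5:r2}--\eqref{cor3.5:r12}, with \eqref{cor3.5:r1} serving as the reference $W(\mathrm{id})$. It remains only to verify, assignment by assignment, that the ratio $P(\tau)/P(\mathrm{id})$ simplifies to the displayed coefficient; here factors such as $qb$, $\frac{qb}{e}$ and $\frac{qb}{f}$ cancel against their images, leaving the compact quotients shown.

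The main obstacle I anticipate is organizational rather than conceptual: one must pin down the correspondence between the twelve ordered slot-assignments and the twelve displayed ${}_8W_7$'s (reading off which of $c,d,e,f$ sits in the base and which pair feeds the argument), then carry out the twelve Pochhammer cancellations consistently, and track the validity conditions inherited from Corollary~\ref{cor:3.3}. No special-function input beyond the symmetry of \eqref{aw:def4to5} and the factorization \eqref{cor3.5a.2} is required.
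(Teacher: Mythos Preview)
Your proposal is correct and follows essentially the same approach as the paper: the paper's proof simply says to start with \eqref{cor3.5a.2} and consider all permutations of the symmetric parameters $c,d,e,f$ that produce non-trivial transformations, which is exactly your argument via the symmetry of $\Phi$ and the factorization $\Phi=P\cdot W$. Your observation that the $e\leftrightarrow f$ interchange is trivial, yielding $4!/2=12$ distinct forms indexed by the ordered pair appearing in the base of the ${}_8W_7$, matches the paper's ordering $\{(c,d),(d,c),(c,e),(e,c),\ldots,(e,f),(f,e)\}$.
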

}
\begin{proof}
\medskip
\noindent
Start with \eqref{cor3.5a.2} and consider all permutations of the symmetric parameters $c,d,e,f$ which produce non-trivial transformations. The ordering of the elements is given
by the first argument of the ${}_8W_7$ as follows: $
\{(c,d),(d,c),(c,e),(e,c),(c,f),(f,c),\ldots,(e,f),(f,e)\}.$
\end{proof}

\begin{cor}\label{cor:3.6}
Let $n\in\mathbb N_0$, $b,c,d,e,f,q\in\CCast$, 
such that $|q|\ne 1$.
Then, one has 
the following parameter interchange transformations for a terminating ${}_8W_7$:
\vspace{12pt}
\begin{eqnarray}
&&\hspace{-2.2cm}\label{cor3.6:r1}
{}_8W_7\left(\frac{qb^2}{def};q^{-n},\frac{qb}{de},
\frac{qb}{df},\frac{qb}{ef},c;q,\frac{q^{n+1}{b}}{c}\right)
\\
&&\hspace{-0.2cm}\label{cor3.6:r2}
=\frac{\left(\frac{qb}{c},\frac{q^2b^2}{def};q\right)_n}
{\left(\frac{qb}{d},\frac{q^2b^2}{cef};q\right)_n}
{}_8W_7\left(\frac{qb^2}{cef};q^{-n},\frac{qb}{ce},
\frac{qb}{cf},\frac{qb}{ef},d;q,\frac{q^{n+1}{b}}{d}\right)
\\
&&\hspace{-0.2cm}\label{cor3.6:r3}
=\frac{\left(\frac{qb}{c},\frac{q^2b^2}{def};q\right)_n}
{\left(\frac{qb}{e},\frac{q^2b^2}{cdf};q\right)_n}
{}_8W_7\left(\frac{qb^2}{cdf};q^{-n},\frac{qb}{cd},
\frac{qb}{cf},\frac{qb}{df},e;q,\frac{q^{n+1}{b}}{e}\right)
\\
&&\hspace{-0.2cm}\label{cor3.6:r4}
=\frac{\left(\frac{qb}{c},\frac{q^2b^2}{def};q\right)_n}
{\left(\frac{qb}{f},\frac{q^2b^2}{cde};q\right)_n}
{}_8W_7\left(\frac{qb^2}{cde};q^{-n},
\frac{qb}{cd},\frac{qb}{ce},\frac{qb}{de},f;q,\frac{q^{n+1}{b}}{f}\right).
\end{eqnarray}
\end{cor}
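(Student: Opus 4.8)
The plan is to exploit the same mechanism used to prove Corollary~\ref{cor:3.5}, but starting from the representation \eqref{cor3.5a.6} rather than \eqref{cor3.5a.2}. The crucial observation is that the left-hand side of \eqref{cor3.5a.6}, namely the terminating
$$
{}_8W_7\left(b;q^{-n},c,d,e,f;q,\frac{q^{n+2}b^2}{cdef}\right),
$$
is completely symmetric in the four parameters $c,d,e,f$. This is immediate from the definition \eqref{VWP87}: the parameters $c,d,e,f$ occupy the numerator slots while their partners $qb/c,qb/d,qb/e,qb/f$ occupy the denominator slots, each numerator paired with its own denominator, and the argument $q^{n+2}b^2/(cdef)$ is itself symmetric in $c,d,e,f$. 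Hence any permutation of $\{c,d,e,f\}$ leaves this ${}_8W_7$ unchanged.

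Granting this, I would apply the three transpositions $c\leftrightarrow d$, $c\leftrightarrow e$, $c\leftrightarrow f$ to the identity \eqref{cor3.5a.6}. Since the left-hand side is invariant under each of these, the corresponding right-hand sides must all coincide. Each right-hand side is a product of the common Pochhammer factor $\left(q^2b^2/(cdef),qb;q\right)_n$ with a prefactor $1/\left(qb/c,q^2b^2/(def);q\right)_n$ (suitably permuted) times the single-parameter ${}_8W_7$ appearing in \eqref{cor3.6:r1}. Equating the $c$-version with each permuted version, the common factor $\left(q^2b^2/(cdef),qb;q\right)_n$ cancels, and solving for the ${}_8W_7$ in \eqref{cor3.6:r1} produces exactly the prefactor $\left(qb/c,q^2b^2/(def);q\right)_n/\left(qb/d,q^2b^2/(cef);q\right)_n$ displayed in \eqref{cor3.6:r2}, and likewise for \eqref{cor3.6:r3} and \eqref{cor3.6:r4}.

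Only three non-trivial formulas arise because the ${}_8W_7$ in \eqref{cor3.6:r1} is already symmetric in $d,e,f$: under any transposition of $d,e,f$ its first argument $qb^2/(def)$ is fixed, the triple $\{qb/(de),qb/(df),qb/(ef)\}$ is permuted among itself, and the parameter $c$ together with the argument $q^{n+1}b/c$ are untouched, so interchanging two of $d,e,f$ returns the same expression. The only real labor is bookkeeping: carefully tracking how each Pochhammer symbol in the prefactor of \eqref{cor3.5a.6} transforms under $c\leftrightarrow d$, $c\leftrightarrow e$, and $c\leftrightarrow f$, and confirming that cancellation of $\left(q^2b^2/(cdef),qb;q\right)_n$ leaves precisely the stated ratios. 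I expect the main conceptual point to be the symmetry verification of the left-hand side of \eqref{cor3.5a.6}; the remainder is routine simplification.
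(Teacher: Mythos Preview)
Your proposal is correct and follows essentially the same approach as the paper: start from \eqref{cor3.5a.6}, use the evident symmetry of the left-hand ${}_8W_7$ in $c,d,e,f$, and record the non-trivial permutations of the right-hand side. Your additional explanation of why exactly three non-trivial transformations arise (the built-in $d,e,f$ symmetry of the right-hand ${}_8W_7$) is a helpful elaboration but does not differ in substance from the paper's one-line proof.
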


\begin{proof}
\noindent \boro{
Start with \eqref{cor3.5a.6} and consider all permutations of the symmetric parameters $c,d,e,f$ which produce non-trivial transformations.
}
\end{proof}

\zoro{
\begin{rem}
Other sets of parameter interchange transformations
can be obtained by considering all permutations
of the symmetric parameters $c,d,e,f$ in
\eqref{cor3.5a.7},
{
\eqref{cor3.5a.7b}, 
\eqref{cor3.5a.7c},
respectively}. However, one can see that
these are equivalent to the above Corollary 
\ref{cor:3.6} by replacing
\begin{eqnarray}
&&(b,c,d,e,f)\mapsto\left(\frac{q^{{-2n-1}}def}{b^2},\frac{q^{{-n-1}}cdef}{b^2},\frac{q^{-n}f}{b},\frac{q^{-n}e}{b},\frac{q^{-n}d}{b}\right)%
{,}\\
&&{(b,c,d,e,f)\mapsto\left(\frac{q^{{-2n-1}}cde}{b^2},\frac{q^{{-n-1}}cdef}{b^2},\frac{q^{-n}c}{b},\frac{q^{-n}d}{b},\frac{q^{-n}e}{b}\right)},\\
&&{(b,c,d,e,f)\mapsto\left(\frac{q^{-n-1}def}{b},\frac{q^{-n-1}cdef}{b^2},f,e,d\right),}
\end{eqnarray}
{respectively}.
\end{rem}
}


\boro{
\begin{cor}
\label{cor3.8}
Let $n\in\mathbb N_0$, $b,c,d,e,f,q\in\CCast$, 
such that $|q|\ne 1$.
Then, one has 
the following parameter interchange transformations
for a terminating ${}_4\phi_3$:
\begin{eqnarray}
&&\hspace{-2.2cm}\label{cor3.8:r1}\qhyp43{q^{-n},\frac{qb}{ef},c,d}
{\frac{q^{-n}cd}{b},\frac{qb}{e},\frac{qb}{f}}{q,q}\\
&&\hspace{-0.2cm}\label{cor3.8:r2}=
\frac{\left(\frac{qb}{de},\frac{qb}{c};q\right)_n}
{\left(\frac{qb}{cd},\frac{qb}{e};q\right)_n}\qhyp43{q^{-n},
\frac{qb}{cf},d,e}{\frac{q^{-n}de}{b},\frac{qb}{c},\frac{qb}{f}}{q,q}\\
&&\hspace{-0.2cm}\label{cor3.8:r3}=
\frac{\left(\frac{qb}{df},\frac{qb}{c};q\right)_n}
{\left(\frac{qb}{cd},\frac{qb}{f};q\right)_n}\qhyp43{q^{-n},\frac{qb}{ce},d,f}
{\frac{q^{-n}df}{b},\frac{qb}{c},\frac{qb}{e}}{q,q}\\
&&\hspace{-0.2cm}\label{cor3.8:r4}=
\frac{\left(\frac{qb}{ce},\frac{qb}{d};q\right)_n}
{\left(\frac{qb}{cd},\frac{qb}{e};q\right)_n}\qhyp43{q^{-n},
\frac{qb}{df},c,e}{\frac{q^{-n}ce}{b},\frac{qb}{d},\frac{qb}{f}}{q,q}\\
&&\hspace{-0.2cm}\label{cor3.8:r5}=
\frac{\left(\frac{qb}{cf},\frac{qb}{d};q\right)_n}
{\left(\frac{qb}{cd},\frac{qb}{f};q\right)_n}\qhyp43{q^{-n},\frac{qb}{de},c,f}
{\frac{q^{-n}cf}{b},\frac{qb}{d},\frac{qb}{e}}{q,q}\\
&&\boro{\hspace{-0.2cm}\label{cor3.8:r6}=
\frac{\left(\frac{qb}{ef},\boro{\frac{qb}{c}},\frac{qb}{d};q\right)_n}
{\left(\frac{qb}{de},\boro{\frac{qb}{e}},\frac{qb}{f};q\right)_n}\qhyp43{q^{-n},
\frac{qb}{cd},e,f}{\frac{q^{-n}ef}{b},\frac{qb}{c},\frac{qb}{d}}{q,q}.}
\end{eqnarray}
\end{cor}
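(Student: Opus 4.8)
The plan is to read off all six of these terminating ${}_4\phi_3$ identities from the single line \eqref{cor3.5a.3} of Corollary~\ref{cor:3.3}. That line asserts that
\[
\frac{\left(\frac{qb}{cd},qb;q\right)_n}{\left(\frac{qb}{c},\frac{qb}{d};q\right)_n}\qhyp43{q^{-n},\frac{qb}{ef},c,d}{\frac{q^{-n}cd}{b},\frac{qb}{e},\frac{qb}{f}}{q,q}
\]
equals the terminating very-well-poised series ${}_8W_7\!\left(b;q^{-n},c,d,e,f;q,\frac{q^{n+2}b^2}{cdef}\right)$ of \eqref{aw:def4to5}. As stressed in Remark~\ref{rem:3.4} and in the discussion opening Corollary~\ref{cor:3.5}, this ${}_8W_7$ is symmetric in $c,d,e,f$ (even its argument $q^{n+2}b^2/(cdef)$ is symmetric), so the displayed product is invariant under every permutation of the four parameters $c,d,e,f$.

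First I would record the two evident symmetries of the base ${}_4\phi_3$ itself, namely $c\leftrightarrow d$ (the top entries $c,d$ and the bottom entry $q^{-n}cd/b$ are symmetric) and $e\leftrightarrow f$ (the top entry $qb/ef$ and the bottom pair $qb/e,\,qb/f$ are symmetric). Since the full symmetric group on $c,d,e,f$ has order $4!$ while these evident symmetries form a subgroup of order $4$, the distinct transformations are indexed by the $\binom{4}{2}=6$ choices of which unordered pair plays the role currently held by $\{c,d\}$. The six choices $\{c,d\},\{d,e\},\{d,f\},\{c,e\},\{c,f\},\{e,f\}$ are exactly the six right-hand sides \eqref{cor3.8:r1}--\eqref{cor3.8:r6}.

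Next, for a chosen pair $\{x,y\}$ with complement $\{u,v\}$, I would equate the permuted invariant to the unpermuted one and solve for the original ${}_4\phi_3$. This leaves it multiplied by
\[
\frac{\left(\frac{qb}{c},\frac{qb}{d};q\right)_n}{\left(\frac{qb}{cd},qb;q\right)_n}\,\frac{\left(\frac{qb}{xy},qb;q\right)_n}{\left(\frac{qb}{x},\frac{qb}{y};q\right)_n}=\frac{\left(\frac{qb}{c},\frac{qb}{d},\frac{qb}{xy};q\right)_n}{\left(\frac{qb}{cd},\frac{qb}{x},\frac{qb}{y};q\right)_n},
\]
where the common factor $(qb;q)_n$ has already cancelled. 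In the four mixed cases the chosen pair shares exactly one parameter with $\{c,d\}$, so one further shifted factorial cancels and the coefficient collapses to the two-factor-over-two-factor form displayed in \eqref{cor3.8:r2}--\eqref{cor3.8:r5}; in the remaining case $\{x,y\}=\{e,f\}$ no such cancellation occurs and the full three-factor-over-three-factor coefficient of \eqref{cor3.8:r6} survives.

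The only genuine work, and hence the step I expect to need care, is this bookkeeping of the prefactors: for each of the five nontrivial pairings one must verify which single shifted factorial cancels once $(qb;q)_n$ is removed and check that the surviving numerator and denominator agree with the stated coefficient. There is no transformation-theoretic obstacle here, since everything rests on the already-established symmetry of the ${}_8W_7$ in \eqref{aw:def4to5}; the content is purely the routine simplification of ratios of $q$-shifted factorials, carried out as in the proofs of Corollaries~\ref{cor:3.5} and \ref{cor:3.6}.
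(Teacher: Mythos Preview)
Your approach is correct and coincides with the paper's own proof, which simply reads: ``Start with \eqref{cor3.5a.3} and consider all permutations of the symmetric parameters $c,d,e,f$ which produce non-trivial transformations.'' You have merely filled in the details the paper leaves implicit---the coset counting under the evident $c\leftrightarrow d$, $e\leftrightarrow f$ symmetries and the cancellation of the $(qb;q)_n$ factor in the prefactor ratio.
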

}
\begin{proof}
\noindent \boro{
Start with \eqref{cor3.5a.3} and consider all permutations of the symmetric parameters $c,d,e,f$ which produce non-trivial transformations.
}
\end{proof}

\begin{rem}
Another set of parameter interchange transformations
can be obtained by considering all permutations
of the symmetric parameters $c,d,e,f$ in
\eqref{cor3.5a.4}. However, one can see that
these are equivalent to the above Corollary 
\ref{cor3.8} by replacing
\[
(b,c,d,e,f)\mapsto\left(\frac{q^{-2n}}{b},\frac{q^{-n}c}{b},\frac{q^{-n}d}{b},\frac{q^{-n}e}{b},\frac{q^{-n}f}{b}\right).
\]
\end{rem}

\boro{
\begin{cor}
\label{cor3.10}
Let $n\in\mathbb N_0$, $b,c,d,e,f,q\in\CCast$, 
such that $|q|\ne 1$.
Then, one has 
the following parameter interchange transformations for a terminating ${}_4\phi_3$:
\begin{eqnarray}
&&\hspace{-2.2cm}\label{cor3.10:r1}
\qhyp43{q^{-n},\frac{qb}{cd},\frac{qb}{ce},\frac{qb}{cf}}
{\frac{q^2b^2}{cdef},\frac{q^{1-n}}{c},\frac{qb}{c}}{q,q}\\
&&\hspace{-0.2cm}\label{cor3.10:r2}=
\frac{\left(\frac{qb}{d},d;q\right)_n}
{\left(\frac{qb}{c},c;q\right)_n}
\qhyp43{q^{-n},\frac{qb}{dc},\frac{qb}{de},\frac{qb}{df}}
{\frac{q^2b^2}{cdef},\frac{q^{1-n}}{d},\frac{qb}{d}}{q,q}\\
&&\hspace{-0.2cm}\label{cor3.10:r3}=
\frac{\left(\frac{qb}{e},e;q\right)_n}
{\left(\frac{qb}{c},c;q\right)_n}
\qhyp43{q^{-n},\frac{qb}{ec},\frac{qb}{ed},\frac{qb}{ef}}
{\frac{q^2b^2}{cdef},\frac{q^{1-n}}{e},\frac{qb}{e}}{q,q}\\
&&\hspace{-0.2cm}\label{cor3.10:r4}=
\frac{\left(\frac{qb}{f},f;q\right)_n}
{\left(\frac{qb}{c},c;q\right)_n}
\qhyp43{q^{-n},\frac{qb}{fc},\frac{qb}{fd},\frac{qb}{fe}}
{\frac{q^2b^2}{cdef},\frac{q^{1-n}}{f},\frac{qb}{f}}{q,q}.
\end{eqnarray}
\end{cor}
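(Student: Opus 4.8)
The plan is to exploit the symmetry in $c,d,e,f$ that is already built into Corollary~\ref{cor:3.3}, exactly as in the proofs of Corollaries~\ref{cor:3.5}, \ref{cor:3.6}, and~\ref{cor3.8}. The starting point is the representation \eqref{cor3.5a.5}, which asserts that the terminating very-well poised series \eqref{aw:def4to5}, namely ${}_8W_7(b;q^{-n},c,d,e,f;q,q^{n+2}b^2/(cdef))$, equals
$$\frac{\left(\frac{q^2b^2}{cdef},qb,c;q\right)_n}{\left(\frac{qb}{d},\frac{qb}{e},\frac{qb}{f};q\right)_n}\,\qhyp43{q^{-n},\frac{qb}{cd},\frac{qb}{ce},\frac{qb}{cf}}{\frac{q^2b^2}{cdef},\frac{q^{1-n}}{c},\frac{qb}{c}}{q,q}.$$

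First I would record the crucial symmetry: the left-hand side \eqref{aw:def4to5} is invariant under every permutation of $c,d,e,f$. Indeed, the argument $q^{n+2}b^2/(cdef)$ is symmetric in these four parameters, and through its defining ${}_8\phi_7$ in \eqref{VWP87} the series carries the upper parameters $c,d,e,f$ together with the paired lower parameters $qb/c,qb/d,qb/e,qb/f$, so any transposition of two of $c,d,e,f$ simultaneously permutes the upper list and the lower list and leaves the series fixed. The right-hand side of \eqref{cor3.5a.5}, by contrast, singles out $c$ while $d,e,f$ enter symmetrically.

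The key step is then to apply the three transpositions $c\leftrightarrow d$, $c\leftrightarrow e$, $c\leftrightarrow f$ to the identity \eqref{cor3.5a.5}. Since \eqref{aw:def4to5} is unchanged under each of these, equating the unpermuted right-hand side with each permuted copy yields a relation of the shape $P_c\,{}_4\phi_3^{(c)}=P_x\,{}_4\phi_3^{(x)}$, where $x\in\{d,e,f\}$ and $P_x$ denotes the prefactor of \eqref{cor3.5a.5} with $c$ and $x$ interchanged. Dividing by $P_c$ produces \eqref{cor3.10:r2}, \eqref{cor3.10:r3}, and \eqref{cor3.10:r4}. Transpositions internal to $\{d,e,f\}$ leave \eqref{cor3.5a.5} literally unchanged and therefore reproduce \eqref{cor3.10:r1}, giving nothing new.

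The only genuine computation, which I expect to be the main (though routine) obstacle, is the simplification of the prefactor ratio $P_x/P_c$. For $c\leftrightarrow d$ one finds
$$\frac{P_d}{P_c}=\frac{\left(\frac{q^2b^2}{cdef},qb,d;q\right)_n\big/\left(\frac{qb}{c},\frac{qb}{e},\frac{qb}{f};q\right)_n}{\left(\frac{q^2b^2}{cdef},qb,c;q\right)_n\big/\left(\frac{qb}{d},\frac{qb}{e},\frac{qb}{f};q\right)_n}=\frac{\left(\frac{qb}{d},d;q\right)_n}{\left(\frac{qb}{c},c;q\right)_n},$$
after the symmetric factors $\left(\frac{q^2b^2}{cdef},qb;q\right)_n$ and the common $\left(\frac{qb}{e},\frac{qb}{f};q\right)_n$ cancel; this matches the stated prefactor of \eqref{cor3.10:r2}, and the swaps $c\leftrightarrow e$, $c\leftrightarrow f$ are identical upon relabelling. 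This completes the proof.
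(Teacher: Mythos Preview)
Your proposal is correct and follows precisely the paper's own approach: start from \eqref{cor3.5a.5}, use the manifest symmetry of the ${}_8W_7$ in $c,d,e,f$, and apply the three non-trivial transpositions $c\leftrightarrow d$, $c\leftrightarrow e$, $c\leftrightarrow f$ to obtain \eqref{cor3.10:r2}--\eqref{cor3.10:r4}. The prefactor computation you carry out is exactly the simplification the paper leaves implicit.
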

}
\begin{proof}
\noindent \boro{
Start with \eqref{cor3.5a.5} and consider all permutations of the symmetric parameters $c,d,e,f$ which produce non-trivial transformations.
}
\end{proof}

\begin{rem}
Another set of parameter interchange transformations
can be obtained by considering all permutations
of the symmetric parameters $c,d,e,f$ in
\eqref{cor3.5a.6b}. However, one can see that
these are equivalent to the above Corollary 
\ref{cor3.10} by replacing
\[
(b,c,d,e,f)\mapsto\left(\frac{q^{-n}f}{e},\frac{qb}{ce},\frac{qb}{de},f,\frac{q^{-n}f}{b}\right).
\]
\end{rem}


The $q$-inverse Askey--Wilson polynomials are simply a scaled version of the Askey--Wilson 
polynomials with the free parameters $a_k$ replaced by their reciprocals $a_{k}^{-1}$.
We demonstrate this in the following remark.

\begin{rem}
\label{invAWrem}
Let ${\mathsf p}_n(\theta;a_1,a_2,a_3,a_4|q):=p_n(x;{\bf a}|q)$, where $x=\cos\theta$,
be any representation of the Askey--Wilson polynomials. Then the
$q$-inverse Askey--Wilson polynomials
$p_n(x;{\bf a}|q^{-1})$ are
given by
\begin{eqnarray*}
&&\hspace{-1cm}{\mathsf p}_n(\theta;a_1,a_2,a_3,a_4|q^{-1})
=q^{-3\binom{n}{2}}(-a_{1234})^n
{\mathsf p}_n\!\left(\!\left.-\theta;
a_1^{-1}, a_2^{-1}, a_3^{-1}, a_4^{-1}
\right|q\right)\\
&&\hspace{-1cm}\hspace{3.46cm}=q^{-3\binom{n}{2}}(-a_{1234})^n
{\mathsf p}_n\!\left(\left.\theta;
a_1^{-1}, a_2^{-1}, a_3^{-1}, a_4^{-1}
\right|q\right),
\end{eqnarray*}
where the second equality 
follows from Remark
\ref{rem:2.7}. So aside from a specific normalization, as is well-known, 
the $q$-inverse Askey--Wilson polynomials are the Askey--Wilson polynomials 
with the parameters taken to be their reciprocals. 
Note that this will not be the case for the symmetric subfamilies of 
the Askey--Wilson polynomials.
\end{rem}

Nonetheless, we give in the following corollary the 
terminating basic hypergeometric representations of these polynomials.
\begin{cor} \label{cor:3.13}
Let $p_n(x,{\bf a}|q)$ and all the respective parameters be defined as previously. 
Then, the $q$-inverse Askey--Wilson polynomials are given by:
\begin{eqnarray}
\label{qiaw:1}\hspace{-0.55cm}
p_n(x;{\bf a}|q^{-1}) &&=
q^{-3\binom{n}{2}} (-a_pa_{1234})^n \left(\left\{\frac{1}
{a_{ps}}\right\}_{s \neq p};q\right)_n \qhyp43{q^{-n},\frac{q^{n-1}}{a_{1234}}, 
\frac{\expe^{\pm i\theta}}{a_p}}{\left\{\frac{1}{a_{ps}}\right\}_{s \neq p}}{q,q} \\
\label{qiaw:2}&&=q^{-4\binom{n}{2}}(a_p a_{1234})^n 
\left(\frac{q^{n-1}}{a_{1234}}, \frac{\expe^{\pm i\theta}}{a_p};q\right)_n 
\qhyp43{q^{-n}, \{q^{1-n}a_{ps}\}_{s \neq p}}
{q^{2-2n}a_{1234} ,q^{1-n}a_p\expe^{\pm i\theta}}{q,q} \\
\label{qiaw:3} &&=q^{-3\binom{n}{2}}
\bigl(-a_{1234}\expe^{-i\theta}\bigr)^n \left(\frac{1}{a_{pr}},\frac{\expe^{i\theta}}{a_t}, 
\frac{\expe^{i\theta}}{a_u};q\right)_n \qhyp43{q^{-n}, \frac{\expe^{-i\theta}}{a_p}, 
\frac{\expe^{-i\theta}}{a_r}, q^{1-n}a_{tu}}
{\frac{1}{a_{pr}}, 
q^{1-n}a_t \expe^{-i\theta}, q^{1-n}a_u \expe^{-i\theta}}{q,q} \\
&&=q^{-3{n\choose 2}}
(-a_{1234} \expe^{-i\theta})^n 
\dfrac{\left(\frac{1}{q a_{1234}};q\right)_{2n}\left(\left\{\frac{\expe^{i\theta}}
{a_s}\right\}_{s\ne p}
,\frac{a_p\,\expe^{i\theta}}{q a_{1234}};q\right)_{n}}
{\left(\frac{1}{qa_{1234}};q\right)_{n}
\left(\frac{a_p\,\expe^{i\theta}}{q a_{1234}};q\right)_{2n}}
\nonumber \\
&&\label{qiaw:6}
\boro{\hspace{2.5cm}\times\,{}_8W_7\left(
\frac{q^{1-2n}a_{1234}\expe^{-i\theta}}{a_p};q^{-n},
\left\{\frac{q^{1-n}a_{1234}}{a_{ps}}\right\}_{s\ne p},
\frac{\expe^{-i\theta}}{a_{p}};q,
q a_p\,\expe^{-i\theta}\right)}
\\
&&\label{qiaw:7}
\boro{=
q^{-3{n\choose 2}}
(-a_{1234} \expe^{-i\theta})^n 
\dfrac{\left(\frac{\expe^{i\theta}}{a_p},\{\frac{a_{ps}}
{a_{1234}}\}_{s\ne p};q\right)_n}
{\left(\frac{a_p\,\expe^{-i\theta}}{a_{1234}};q\right)_{n}}}
\nonumber \\
&&\boro{\hspace{2.5cm}\times\,{}_8W_7\left(
\frac{a_p\expe^{-i\theta}}{qa_{1234}};q^{-n},
\left\{\frac{\expe^{-i\theta}}{a_s}\right\}_{s\ne p}, \frac{q^{n-1}}{a_{1234}};q,
qa_p\,\expe^{i\theta}\right)}\\
&&=q^{-3\binom{n}{2}} 
\left(-a_pa_{1234}\right)^n
\frac{\left(
\frac{1}{a_{pt}},\frac{1}{a_{pu}},
\frac{\expe^{\pm i\theta}}{a_r}
;q\right)_n }
{(\frac{a_p}{a_r};q)_n}
\nonumber \\
\label{qiaw:5} 
&& \hspace{2.2 cm} \times
{}_8W_7\left(
\frac{q^{-n}a_r}{a_p};q^{-n},
q^{1-n}a_{rt}, 
q^{1-n}a_{ru}, 
\frac{\expe^{\pm i\theta}}{a_p};
q,\frac{q^n}{a_{tu}}
\right)\\
&&=q^{-3\binom{n}{2}} \bigl(-a_{1234}\expe^{-i\theta}\bigr)^n 
\frac{(\{\frac{\expe^{i\theta}}{a_s}\};q)_n}{(\expe^{2i\theta};q)_n} 
\label{qiaw:4} 
{}_8W_7\left(
q^{-n}\expe^{-2i\theta};q^{-n},
\left\{\frac{\expe^{-i\theta}}{a_s}\right\};q
,q^{2-n}a_{1234}
\right).
\end{eqnarray}
\end{cor}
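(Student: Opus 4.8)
The plan is to obtain each representation \eqref{qiaw:1}--\eqref{qiaw:4} from the correspondingly positioned representation \eqref{aw:def1}--\eqref{aw:def4} of Theorem \ref{thm:3.1} by the $q\mapsto q^{-1}$ procedure. Two equivalent routes are available. The direct route applies Theorem \ref{thm:2.6}: for a fixed representation I would identify the multiplier $g_r(q)$, the numerator and denominator lists $\mathbf{a}(q),\mathbf{b}(q)$, and the argument $z(q)=q$, then read off $f_r(q^{-1})$ from \eqref{termreprr}. Since $x=\cos\theta$ is $q$-independent, $\theta$ is held fixed throughout. The indirect route uses Remark \ref{invAWrem}, substituting $a_k\mapsto a_k^{-1}$ (together with $\theta\mapsto-\theta$, which is harmless by Remark \ref{rem:2.7}) into each representation of Theorem \ref{thm:3.1} and multiplying by the normalization $q^{-3\binom{n}{2}}(-a_{1234})^n$; I would check that the two routes produce identical expressions.

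First I would treat the multiplier. In every case $g_r(q^{-1})$ is a product of factors $(\alpha;q^{-1})_n$, and I would rewrite each via \eqref{poch.id:3} as $(\alpha^{-1};q)_n(-\alpha)^nq^{-\binom{n}{2}}$. Collecting the resulting powers of $q$ and signs is where the characteristic prefactors arise: applied to \eqref{aw:def1} the three factors $\{a_{ps}\}_{s\ne p}$ contribute $q^{-3\binom{n}{2}}$ and, since $\prod_{s\ne p}a_{ps}=a_p^2a_{1234}$, a factor $(-a_p^2a_{1234})^n$, which combines with the $a_p^{-n}$ to give exactly the prefactor $q^{-3\binom{n}{2}}(-a_pa_{1234})^n$ of \eqref{qiaw:1}. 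The same mechanism, using $a_pa_ra_ta_u=a_{1234}$, produces the $(-a_{1234}\expe^{-i\theta})^n$-type prefactors in the $\theta$-asymmetric representations.

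Next I would transform the series. By Theorem \ref{thm:2.6} the numerator and denominator parameters are inverted after the substitution $q\mapsto q^{-1}$ (so $q^{n-1}a_{1234}\mapsto q^{n-1}/a_{1234}$, $a_p\expe^{i\theta}\mapsto\expe^{-i\theta}/a_p$, $a_{ps}\mapsto 1/a_{ps}$, and so on), while the argument becomes $q^{n+1}z(q^{-1})\prod a_j(q^{-1})/\prod b_j(q^{-1})$. I would verify the routine cancellation showing this argument collapses to $q$ for each of the terminating ${}_4\phi_3$ representations, yielding \eqref{qiaw:1}--\eqref{qiaw:3}. For the ${}_8W_7$ representations \eqref{qiaw:6}, \eqref{qiaw:7}, \eqref{qiaw:5}, \eqref{qiaw:4}, I would apply the same inversion to the underlying ${}_8\phi_7$ defined by \eqref{VWP87} and recognize that inverting all parameters while sending $q\mapsto q^{-1}$ preserves the very-well-poised relations $qb=b_1a_2=\cdots$, so the transformed series is again a ${}_8W_7$; identifying its new base parameter and argument then gives the stated forms.

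The main obstacle I expect is the ${}_8W_7$ bookkeeping: confirming that the inverted base parameter (e.g.\ $q^{-n}\expe^{2i\theta}\mapsto q^{-n}\expe^{-2i\theta}$ in \eqref{qiaw:4}, and the more involved bases in \eqref{qiaw:6}--\eqref{qiaw:5}) together with the transformed argument simplify exactly to those printed, and that the pair $\pm q\sqrt{b},\pm\sqrt{b}$ continues to close up into very-well-poised form after inversion. The remaining steps---applying \eqref{poch.id:3} termwise and reducing the powers of $q$---are mechanical once the prefactor accounting above is in place.
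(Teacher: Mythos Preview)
Your proposal is correct and follows essentially the same approach as the paper: the paper's proof consists of a single sentence stating that one applies Theorem \ref{thm:2.6} to each of the representations \eqref{aw:def1}--\eqref{aw:def4} to obtain \eqref{qiaw:1}--\eqref{qiaw:4}. You have simply spelled out in detail how that application is carried out (the handling of $g_r(q^{-1})$ via \eqref{poch.id:3}, the argument reduction, and the preservation of the very-well-poised structure), together with the optional cross-check via Remark \ref{invAWrem}, which the paper does not invoke in the proof but which is entirely consistent with it.
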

\begin{proof}
Applying Theorem 
\ref{thm:2.6} to the terminating basic hypergeometric representations
of the Askey--Wilson polynomials \eqref{aw:def1}--\eqref{aw:def4}
produces the inverted basic hypergeometric representations
\eqref{qiaw:1}--\eqref{qiaw:4}.
This completes the proof.
\end{proof}
\vspace{6pt} 
\section*{Acknowledgments} H.S.C. would like to thank Mourad Ismail for valuable discussions..
R.S.C-S acknowledges financial support through the 
research project PGC2018–096504-B-C33 supported by 
Agencia Estatal de Investigaci\'on of Spain.

\end{document}